\numberwithin{theorem}{section}
\numberwithin{lem}{section}
\newtheorem{defn}{Definition}[section]
\numberwithin{defn}{section}
\numberwithin{remark}{section}
\numberwithin{equation}{section}
\begin{document}

\title{Flux correction for nonconservative convection-diffusion equation
}


\author{Sergii Kivva       
}


\institute{Sergii Kivva \at
              Institute of Mathematical Machines and System Problems, National Academy of Sciences, Kyiv, Ukraine \\
              \email{skivva@gmail.com}           
}

\date{Received: date / Accepted: date}

\maketitle

\begin{abstract}
Our goal is to develop a flux limiter of the Flux-Corrected Transport method for a nonconservative convection-diffusion equation. For this, we consider a hybrid difference scheme that is a linear combination of a monotone scheme and a scheme of high-order accuracy. The flux limiter is computed as an approximate solution of a corresponding optimization problem with a linear objective function. The constraints for this optimization problem are derived from inequalities that are valid for the monotone scheme and apply to the hybrid scheme. Our numerical results with the flux limiters, which are exact and approximate solutions to the optimization problem, are in good agreement.
\keywords{flux-corrected transport \and nonconservative convection-diffusion equation \and difference scheme \and linear programming}
 \subclass{MSC 65M06 \and MSC 65M08 }
\end{abstract}

\section{Introduction}
\label{intro}
The objective of this paper is to develop a flux limiter for the flux-corrected transport (FCT) method for a nonconservative convection-diffusion equation. The numerical solution of such equations arises in a variety of applications such as hydrodynamics, heat, and mass transfer. To the best of our knowledge, we are not aware of any formulas for computing the FCT flux limiter for a nonconservative convection-diffusion equation.

On an interval $[a, b]$, we consider the initial boundary value problem (IBVP) for a nonconservative convection-diffusion equation
\begin{equation}
\label{eq:1_1}
\frac{{\partial \rho }}{{\partial t}} + u(x,t)\frac{{\partial \rho }}{{\partial x}} + \lambda (x,t)\rho  = \frac{\partial }{{\partial x}}\left( {D(x,t)\frac{{\partial \rho }}{{\partial x}}} \right) + f(x,t), \quad t > 0  
\end{equation}
with initial condition
\begin{equation}
\label{eq:1_2}
\rho (x,0) = {\rho ^0}(x)
\end{equation}
where $0 \le D(x,t) \le \mu = const$.

For simplicity and without loss of generality, we assume that the Dirichlet boundary conditions are specified at the ends of the interval $[a,b]$ 
\begin{equation}
\label{eq:1_3}
\rho (a,t) = {\rho _a}(t) 
\end{equation}
\begin{equation}
\label{eq:1_4}
\rho (b,t) = {\rho _b}(t) 
\end{equation}

The two-step FCT algorithm was firstly developed by Boris and Book~\cite{b1} for solving a transient continuity equation. Within this approach, the flux at the cell interface is computed as a convex combination of fluxes of a monotone low-order scheme and a high-order scheme. These two fluxes are combined by adding to one of them (basic flux) a limited flux that is the limited difference between the high-order and low-order fluxes at the cell interface. 
In the classical FCT method, the low-order flux is basic and the additional limited flux is antidiffusive.
Kuzmin and his coworkers~\cite{b7,b6} consider the high-order flux as the basic with an additional dissipative flux. Such approach is now known as algebraic flux correction (AFC).
The procedure of two-step flux correction consists of computing the time advanced low order solution in the first step and correcting the solution in the second step to produce accurate and monotone results. The basic idea is to switch between high-order scheme and positivity preserving low-order scheme to provide oscillation free good resolution in steep gradient areas, while at the same time preserve at least second-order accuracy in smooth regions. Later Zalesak~\cite{b2,b3} extended FCT to multidimensional explicit difference schemes. Since the 1970s, FCT has been widely used in the modeling of various physical processes. Many variations and generalizations of FCT and their applications are given in~\cite{b36}.

In this paper, we derive the flux correction formulas for the nonconservative convection-diffusion equation using the approach proposed in \cite{Kivva}. As in the classical FCT method, we use a hybrid difference scheme consisting of a convex combination of low-order monotone and high-order schemes. According to \cite{Kivva}, finding the flux limiters we consider as a corresponding optimization problem with a linear objective function. The constraints for the optimization problem derive from the inequalities which are valid for the monotone scheme and apply to the hybrid scheme. The flux limiters are obtained as an approximate solution to the optimization problem. 
Numerical results show that these flux limiters produce numerical solutions that are in good agreement with the numerical solutions, the flux limiters of which are calculated from optimization problem and correspond to maximal antidiffusive fluxes.

The advantage of such approach is that the two-step classical FCT method is reduced to one-step. 
For flux corrections in the classical FCT method, it is necessary to know the low-order numerical solution at the current time step. In the proposed approach \cite{Kivva}, it is sufficient to know only the numerical solution at the previous time step.

The paper is organized as follows. In Section \ref{Sec1}, we discretize the IVBP~\eqref{eq:1_1}-\eqref{eq:1_4} by a hybrid scheme. An analog of the discrete local maximum principle for the monotone scheme is given in Section \ref{Sec2}. The optimization problem for finding flux limiters and the algorithm of its solving are described in Section \ref{Sec3}. An approximate solution of the optimization problem is derived in Section \ref{Sec4}. The results of numerical experiments are presented in Section \ref{Sec5}. Concluding remarks are drawn in Section \ref{Sec6}.

\section{Hybrid difference scheme}
\label{Sec1}

In this section, we discretize the IBVP \eqref{eq:1_1}-\eqref{eq:1_4} using a hybrid difference scheme, which is a linear combination of a monotone scheme and a high-order scheme.

On the interval $[a,b]$, we introduce a nonuniform grid $\Omega_h$
\begin{equation}
\label{eq:2_1}
\Omega_h = \left\{ {x_i: \;\; x_i = x_{i-1} + \Delta _{i-1/2}x,\;i = \overline {1,N} ;\; x_0 = a, \, x_{N+1} = b} \right\} 
\end{equation}

Assuming that $u(x,t)$ and $\rho (x,t)$ are sufficiently smooth, we consider some approximations of the convective term in \eqref{eq:1_1}. For this, we integrate it on an interval $[x_{i-1/2},x_{i+1/2}]$ and applying the rectangular approximation method at the point $x_i$, as well as backward and forward differencing for the first-order derivative,  we obtain the following upwind discretization
\begin{equation}
\label{eq:2_2}
\begin{split} 
\int\limits_{x_{i-1/2}}^{x_{i+1/2}} {u \,\frac{\partial \rho }{\partial x}} dx = \Delta {x_i} \left[ {{{\left( {u^+ \, \frac{ \partial \rho }{\partial x}} \right)}_i} + {{\left( {u^- \, \frac{\partial \rho }{\partial x}} \right)}_i}} \right] \\
  = \Delta {x_i} \left[ {u_i^+ \, \frac{(\rho_i - \rho_{i-1})}
  {{\Delta _{i-1/2}x}} + u_i^- \frac{{(\rho_{i+1} - \rho_i)}}
  {{\Delta _{i +1/2}x}}} \right] + O\left( {\Delta x_i^2} \right)  
\end{split}   
\end{equation}
where $\rho_i = \rho(x_i,t)$; $\Delta x_i = (x_{i+1} -x_{i-1})/2$ is the spatial size of the $i$th cell; $u^{\pm}=(u \pm \vert u \vert)/2$.

Applying the left and right rectangular rules for numerical integration and central differencing for the first-order derivative, we have another form of upwind discretization
\begin{equation}
\label{eq:2_3}
\begin{split} 
& \int\limits_{x_{i-1/2}}^{x_{i+1/2}} {u\frac{\partial \rho} {\partial x}} dx = \int\limits_{x_{i-1/2}}^{x_{i+1/2}} {u^+ \, \frac{\partial \rho}{\partial x}} dx + \int\limits_{x_{i-1/2}}^{x_{i+1/2}} {u^- \, \frac{\partial \rho}{\partial x}} dx \\
= & \Delta {x_i}\left[ {u_{i-1/2}^+ \frac{{(\rho_i - \rho_{i-1})}}{{\Delta _{i-1/2}}x} + u_{i+1/2}^- \frac{{(\rho _{i+1} - \rho_i)}}{{\Delta _{i+1/2}}x}} \right] + O\left( {\Delta x_i^2} \right)
\end{split}   
\end{equation}

To obtain an approximation of a higher order, in the rectangular approximation rule at a point $x_i$, we use central differencing for the first-order derivative
\begin{equation}
\label{eq:2_4}
\begin{split} 
& \int\limits_{x_{i-1/2}}^{x_{i+1/2}} {u\frac{{\partial \rho }}{\partial x}} dx = \frac{\Delta x_i}{2}{u_i} \frac{(\rho _{i+1} - \rho _{i-1})}{\Delta x_i} \\ 
+ & M \Delta {x_i} \left( {\Delta _{i+1/2}x - {\Delta _{i-1/2}}x} \right) + O\left( {\Delta x_i^3} \right)
\end{split}   
\end{equation}
where $M=const$.

Applying the trapezoidal rule for numerical integration and central differencing for the first-order derivative, we obtain
\begin{equation}
\label{eq:2_5}
\begin{split} 
\int\limits_{x_{i-1/2}}^{x_{i+1/2}} {u \frac{\partial \rho }{\partial x}} dx = \frac{\Delta {x_i}}{2} \left[ {{u_{i-1/2}} \frac{{(\rho _i - \rho _{i-1})}}{{\Delta _{i-1/2}}x} + {u_{i+1/2}} \frac{(\rho _{i+1} - \rho _i)}{{\Delta _{i+1/2}}x}} \right] + O\left( {\Delta x_i^3} \right)
\end{split}   
\end{equation}

Besides, we rewrite the convective term in \eqref{eq:1_1} as follows:
\begin{equation}
\label{eq:2_5_1}
 u\frac{\partial \rho }{\partial x} = \frac{\partial }{\partial x}\left( {u\rho } \right) - \rho \frac{\partial u}{\partial x}  
\end{equation}

We discretize the terms on the right-hand side of \eqref{eq:2_5_1} by the following difference relations
\begin{equation}
\label{eq:2_5_2}
\int\limits_{x_{i-1/2}}^{x_{i+1/2}} {\frac{\partial (u\rho )}{\partial x}} dx = u_{i+1/2}^+ {\rho _i} + u_{i+1/2}^- {\rho _{i+1}} - u_{i-1/2}^+ {\rho _{i-1}} - u_{i-1/2}^ - {\rho _i} + O\left( {\Delta {x_i}} \right)
\end{equation}
\begin{equation}
\label{eq:2_5_3}
\int\limits_{x_{i-1/2}}^{x_{i+1/2}} {\frac{\partial (u\rho )}{\partial x}} dx = \frac{1}{2}{u_{i+1/2}}(\rho _i + \rho _{i+1}) - u_{i-1/2}(\rho _{i-1} + {\rho _i}) + O\left( {\Delta x_i^2} \right)
\end{equation}
\begin{equation}
\label{eq:2_5_4}
\int\limits_{x_{i-1/2}}^{x_{i+1/2}} {\rho \frac{\partial u}{\partial x}} dx = {\rho _i}\left( {u_{i+1/2} - u_{i-1/2}} \right) + O\left( {\Delta x_i^2} \right)
\end{equation}
Using a convex combination of \eqref{eq:2_5_2} and \eqref{eq:2_5_3} to approximate the divergent term in \eqref{eq:2_5_1}, we discretize the convective term as
\begin{equation}
\label{eq:2_5_4}
\begin{split}  
& \int\limits_{x_{i-1/2}}^{x_{i+1/2}} {u\frac{\partial \rho }{\partial x}} dx = \left[ u_{i+1/2}^+ \rho _i + u_{i+1/2}^- \rho _{i+1} + {\beta _{i+1/2}}\frac{\left| {u_{i+1/2}} \right|}{2}\left( {\rho _{i+1} - \rho _i} \right) \right. \\
& - \left. u_{i-1/2}^+ \rho _{i-1} - u_{i-1/2}^- {\rho _i} - {\beta _{i-1/2}}\frac{{\left| {u_{i-1/2}} \right|}}{2}\left( {{\rho _i} - \rho _{i-1}} \right) \right]  
- {\rho _i}\left( {u_{i+1/2}} - {u_{i-1/2}} \right)\end{split} 
\end{equation}
where $\beta _{i+1/2}$ is the flux limiter for the divergent part in square brackets of the convective flux. For a flux correction of the convective term in the divergent form, we refer to \cite{Kivva}.

Below, to approximate the convective term in \eqref{eq:1_1}, we apply a convex combination of \eqref{eq:2_3} and \eqref{eq:2_5}. Note that
\begin{equation}
\label{eq:2_6}
\begin{split} 
& \frac{1}{2} \left[ {{u_{i+1/2}} \frac{{(\rho _{i+1} - \rho _i)}} {{\Delta _{i+1/2}}x} + u_{i-1/2} \frac{{(\rho _i - \rho _{i - 1})}} {{\Delta _{i-1/2}}x}} \right] 
= \left[ u_{i+1/2}^- \frac{{(\rho _{i+1} - \rho _i)}} {{\Delta _{i+1/2}}x} \right. \\
+ & \left. u_{i-1/2}^+ \frac{(\rho _i - \rho _{i-1})}{{\Delta _{i-1/2}}x} \right] + \left[ {\frac{{\left| {u_{i+1/2}} \right|}}{2} \frac{(\rho _{i+1} - \rho _i)} {{\Delta _{i+1/2}}x} - \frac{\left| {u_{i-1/2}} \right|}{2}\frac{(\rho _i - \rho _{i-1})} {{\Delta _{i-1/2}}x}} \right]	
\end{split}   
\end{equation}
The second term in square brackets on the right-hand side of \eqref{eq:2_6} can be considered as an anti-diffusion.

We approximate \eqref{eq:1_1}-\eqref{eq:1_4} by the following weighted difference scheme
\begin{equation}
\label{eq:2_7}
\frac{y_i^{n+1} - y_i^n}{\Delta t} + h_{i+1/2}^{- ,(\sigma )} + h_{i-1/2}^{+,(\sigma )} + (\lambda y)_i^{(\sigma )} = f_i^{(\sigma )}
\end{equation}
where $y_i^n = y(x_i,t^n)$ is the grid function on $\Omega_h$; $\Delta t$ is the time step; $f_i^{(\sigma)} = \sigma f_i^{n+1} +(1-\sigma) f_i^n, \sigma \in [0,1]$. The numerical flux $h_{i+1/2}^{\pm,n}$ is written in the form
\begin{equation}
\label{eq:2_8}
h_{i \mp 1/2}^ {\pm,n}  = \left( {u_{i \mp 1/2}^ {\pm,n}  + d_i^ {\pm,n}  - \alpha _i^ {\pm,n} r_i^ {\pm, n} } \right)\frac{{\Delta _{i \mp 1/2} y^n}}{\Delta _{i \mp 1/2}x}
\end{equation}
where $\alpha_i^{\pm,n} \in [0,1]$ is the flux limiter; $\Delta_{i+1/2} y^n = y_{i+1}^n -y_i^n$; the coefficients $d_i^{\pm,n}$ and $s_i^{\pm,n}$ are computed as
\begin{equation}
\label{eq:2_9}
d_i^ {\pm,n}  =  \pm \max \left( {0,\frac{D_{i \mp 1/2}^n}{\Delta x_i} - \frac{\left| {u_{i \mp 1/2}^n} \right|}{2}} \right) 
\end{equation}
\begin{equation}
\label{eq:2_10}
r_i^ {\pm,n}  =  \mp \min \left( {0,\frac{D_{i \mp 1/2}^n} {\Delta x_i} - \frac{\left| {u_{i \mp 1/2}^n} \right|}{2}} \right)
\end{equation}
Note that for $\sigma=0$ scheme \eqref{eq:2_7} is explicit and implicit for $\sigma>0$. Let us denote by $y_0^n$ and $y_{N+1}^n$  the values of $\rho(x,t)$ at the left and right ends of the interval $[a,b]$ at time $t^n$.

We rewrite the difference scheme \eqref{eq:2_7} in matrix form as
\begin{equation}
\label{eq:2_11} 
\begin{split} 
& \left[ E +  \Delta t\sigma \left( A^{n+1} + \Lambda ^{n+1} \right) \right]{\boldsymbol y^{n+1}} -\Delta t \left[ (B^-{\boldsymbol \alpha^-} )^{(\sigma)} + (B^+{\boldsymbol \alpha^+})^{(\sigma)} \right] \\
& \qquad \qquad = \left[ {E - \Delta t(1 - \sigma ) \left( A^n + {\Lambda ^n} \right) } \right]{\boldsymbol y^n} + \Delta t{\boldsymbol g^{(\sigma )}}
\end{split}
\end{equation} 
where $(B^\pm {\boldsymbol \alpha^\pm)^{(\sigma)}}=\sigma B^{\pm,n+1} \boldsymbol \alpha^{\pm,n+1} +(1-\sigma) B^{\pm,n} \boldsymbol \alpha^{\pm,n}$; $B^{\pm}=diag \lbrace b_i^{\pm}(\boldsymbol y) \rbrace _{i=1}^N$ is the diagonal matrix;
$E$ is the identity matrix of order $N$; $A=\lbrace a_{ij} \rbrace _i^j$ is tridiagonal square matrices of order $N$; $\Lambda=diag(\lambda_1,\ldots,\lambda_N)$ is the diagonal matrix; $\boldsymbol \alpha ^\pm = (\alpha_1^\pm,\ldots,\alpha_N^\pm)^T \in R^N$ are the numerical vectors of flux limiters; $\boldsymbol g=(g_1,\ldots,g_N)^T$ is the vector of boundary conditions and values of the function $f$ at the points $x_i$. Components of the vector $\boldsymbol g$ are given by 
\begin{equation}
\label{eq:2_12} 
g_1=\frac{(u_{1/2}^+  + d_1^+ ) y_0} {\Delta _{1 /2}x} + f_1; \quad
g_i=f_i; \quad
g_N =  \frac{-(u_{N+1}^- + d_N^- )y_{N+1}} {\Delta _{N+1/2}} + f_N
\end{equation}

Elements of the matrices $A$ and $B^\pm$ are calculated as
\begin{align}
\label{eq:2_13} 
& a_{ii-1}  = \frac{ -u_{i-1/2}^+ - d_{i}^+} {\Delta _{i-1/2}x}; \quad &
& b_{i}^+  = r_i^+ \; \frac{y_i -y_{i-1} }{\Delta _{i-1/2}x} \notag \\
& a_{ii+1}  = \frac{u_{i+1/2}^- +d_{i}^- }{{\Delta _{i+1/2}x}}; \quad &
& b_{i}^-  = r_i^- \; \frac{y_{i+1} - y_i }{\Delta _{i+1/2}x} \\
& a_{ii}  = -a_{ii-1} -a_{ii+1}; \quad &
&  \notag
\end{align}


\section{Monotone difference scheme}
\label{Sec2}
We consider the system of equations \eqref{eq:2_11} for ${\boldsymbol{\alpha} ^{\pm,n}},{\boldsymbol{\alpha} ^{\pm,n+1}} = 0$
\begin{equation}
\label{eq:3_1}  
\begin{split} 
& \left[ {E + \Delta t\;\sigma {A^{n + 1}}+ \Delta t\sigma \Lambda ^{n+1}} \right]{\boldsymbol{y}^{n + 1}} - \Delta t\;\sigma {\boldsymbol g}^{n+1} \\
= & \left[ {E - \Delta t\;(1 - \sigma ){A^n} - \Delta t (1 -\sigma) \Lambda ^n} \right]{\boldsymbol{y}^n} + \Delta t\;(1 - \sigma ){\boldsymbol g}^{n}	
\end{split}
\end{equation}

In this section, we obtain the monotonicity condition for the difference scheme \eqref{eq:3_1} and derive for it an analog of the discrete local maximum principle, which plays a key role in the flux correction design.

\begin{defn}[\cite{Harten}] 
A difference scheme 
\begin{equation}
\label{eq:3_2} 
y_i^{n+1} = H(y_{i-k}^n,y_{i-k+1}^n,...,y_i^n,...,y_{i+l}^n)
\end{equation}
is said to be monotone if H is a monotone increasing function of each of its arguments.
\end{defn} 

\begin{theorem}
\label{th:3_1}
 If $\Delta t$ satisfies
\begin{equation}
\label{eq:3_3} 
\Delta t\sigma \mathop {\min }\limits_{1 \le i \le N} \lambda _i^{n+1} < 1
\end{equation}
\begin{equation}
\label{eq:3_4} 
\Delta t(1 - \sigma )\mathop {\max }\limits_{1 \le i \le N} \left[ {\frac{u_{i-1/2}^{+ ,n} + d_i^{+ ,n}}{\Delta _{i-1/2}x} - \frac{u_{i+1/2}^{- ,n} + d_i^{- ,n}}{\Delta _{i+1/2}x} + \lambda _i^n} \right] \le 1
\end{equation}
then the difference scheme \eqref{eq:3_1} is monotone.
\end{theorem}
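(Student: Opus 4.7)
My plan is to recast \eqref{eq:3_1} as $L\boldsymbol y^{n+1} = R\boldsymbol y^n + \Delta t\,\boldsymbol s$, with
$L := E + \Delta t\sigma(A^{n+1} + \Lambda^{n+1})$ and $R := E - \Delta t(1-\sigma)(A^n + \Lambda^n)$, and to reduce the monotonicity requirement to the single entrywise inequality $L^{-1}R \ge 0$. Once this is in hand, the explicit representation $y_i^{n+1} = \sum_j(L^{-1}R)_{ij}\,y_j^n + (L^{-1}\Delta t\,\boldsymbol s)_i$ gives $\partial y_i^{n+1}/\partial y_j^n \ge 0$ for all $i,j$, which is exactly the monotonicity of $H$ demanded by the definition above. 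The proof therefore splits into two claims: (i) $L$ is a nonsingular M-matrix, so $L^{-1}\ge 0$; (ii) $R \ge 0$ entrywise.

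For claim (i) the key input is the sign pattern of $A$. From \eqref{eq:2_9}--\eqref{eq:2_10} one has $d_i^+\ge 0$ and $d_i^- \le 0$; combined with $u_{i-1/2}^+\ge 0$ and $u_{i+1/2}^-\le 0$, the formulas \eqref{eq:2_13} give $a_{i,i-1}\le 0$, $a_{i,i+1}\le 0$ and $a_{ii} = -a_{i,i-1}-a_{i,i+1}\ge 0$, so in particular every row of $A$ sums to zero. Hence the off-diagonal entries of $L$ are nonpositive and its row sums equal $1 + \Delta t\sigma\lambda_i^{n+1}$. Condition \eqref{eq:3_3} is exactly what is needed to keep the diagonal of $L$ positive and the row sums strictly positive, producing weak diagonal dominance with positive diagonal; standard M-matrix theory then yields $L^{-1}\ge 0$.

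Claim (ii) is then direct. The off-diagonal entries of $R$ are $-\Delta t(1-\sigma)a_{ij}^n\ge 0$ by the sign analysis above, while the $i$th diagonal entry equals
\[
1 - \Delta t(1-\sigma)\!\left[\frac{u_{i-1/2}^{+,n}+d_i^{+,n}}{\Delta_{i-1/2}x} - \frac{u_{i+1/2}^{-,n}+d_i^{-,n}}{\Delta_{i+1/2}x} + \lambda_i^n\right],
\]
which is precisely the quantity controlled by \eqref{eq:3_4}; thus \eqref{eq:3_4} is the exact statement $R\ge 0$. Combining (i) and (ii) gives $L^{-1}R\ge 0$, and the theorem follows.

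I expect the only real obstacle to be careful sign bookkeeping: one must unfold $a_{ii}$ through \eqref{eq:2_13} to recognise the bracket in \eqref{eq:3_4}, and must track the signs of $u^\pm$ and $d_i^\pm$ consistently. No genuinely analytic difficulty arises, because the flux-limiter contributions disappear from \eqref{eq:3_1} (they are multiplied by $\boldsymbol\alpha^\pm = 0$), so the monotonicity analysis reduces to the standard M-matrix argument for implicit $\theta$-schemes.
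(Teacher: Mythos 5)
Your proposal is correct and follows essentially the same route as the paper: factor the scheme as $L\boldsymbol y^{n+1}=R\boldsymbol y^n+\Delta t\,\boldsymbol s$, use \eqref{eq:3_3} to make $L$ an M-matrix with $L^{-1}\ge 0$, use \eqref{eq:3_4} to make $R\ge 0$ entrywise, and conclude monotonicity from $L^{-1}R\ge 0$. The only difference is that you spell out the sign bookkeeping for $a_{ij}$ and identify the diagonal of $R$ with the bracket in \eqref{eq:3_4} explicitly, steps the paper leaves implicit.
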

\begin{proof}
If \eqref{eq:3_3} holds, the matrix $\left[ E + \Delta t\sigma (A^{n+1}+ \Lambda ^{n+1}) \right]$ is a strictly row diagonally dominant M-matrix. Then the inverse matrix $\left[ E + \Delta t\sigma( A^{n+1}+ \Lambda ^{n+1}) \right]^{-1}$ is a matrix with nonnegative elements.

The nonnegativity of the elements of matrix $\left[ E + \Delta t\sigma (A^{n+1}+ \Lambda ^{n+1}) \right]^{- 1}$ $\times \left[ E - \Delta t(1 - \sigma )( A^n + \Lambda ^n) \right]$ and, hence, the monotonicity of the scheme \eqref{eq:3_1} follows from the nonnegativity of the elements $\left[ {E - \Delta t(1 - \sigma )(A^n +\Lambda ^n} \right]$ for $\Delta t$ satisfying \eqref{eq:3_4}.
\end{proof}

\begin{theorem}
\label{th:3_2}
If $\Delta t$ satisfies 
\begin{equation}
\label{eq:3_5}
\Delta t(1 - \sigma )\mathop {\max }\limits_{1 \le i \le N} \left[ \frac{u_{i-1/2}^{+ ,n} + d_i^{+ ,n}}{\Delta _{i-1/2}x} - \frac{u_{i+1/2}^{- ,n} + d_i^{- ,n}}{\Delta _{i+1/2}x} \right] \le 1,	
\end{equation}
then the numerical solution of the system of equations \eqref{eq:3_1} satisfies the following inequalities   
\begin{equation}
\label{eq:3_6}
\begin{split}
& \mathop {\min }\limits_{k \in S_i} y_k^n - \Delta t(1 - \sigma )\lambda _i^n y_i^n + \Delta t(1 - \sigma )f_i^n  \\ 
\le & y_i^{n+1} + \Delta t \sigma \sum\limits_j {a_{ij}^{n+1}y_j^{n+1}} +\Delta t \sigma \lambda_i^{n+1} y_i^{n+1} - \Delta t \sigma g_i^{n+1} \\
\le & \mathop {\max }\limits_{k \in S_i} y_k^n - \Delta t(1 - \sigma )\lambda _i^n y_i^n + \Delta t(1 - \sigma )f_i^n
\end{split} 
\end{equation} 		
where ${S_i}$ is the stencil of the difference scheme \eqref{eq:3_1} for an $i$th grid node.
\end{theorem}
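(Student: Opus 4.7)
The plan is to read the $i$-th row of the linear system \eqref{eq:3_1} component-wise and rearrange so that the quantity sandwiched in \eqref{eq:3_6} is exhibited as an explicit linear combination of the time-$t^n$ values. Written out, the $i$-th equation of \eqref{eq:3_1} is
\begin{equation*}
y_i^{n+1}+\Delta t\sigma\sum_j a_{ij}^{n+1}y_j^{n+1}+\Delta t\sigma\lambda_i^{n+1}y_i^{n+1}-\Delta t\sigma g_i^{n+1}=\bigl(1-\Delta t(1-\sigma)a_{ii}^n\bigr)y_i^n-\Delta t(1-\sigma)\bigl(a_{i,i-1}^n y_{i-1}^n+a_{i,i+1}^n y_{i+1}^n\bigr)-\Delta t(1-\sigma)\lambda_i^n y_i^n+\Delta t(1-\sigma)g_i^n.
\end{equation*}
After peeling off the reaction and source terms, inequality \eqref{eq:3_6} reduces to showing that the remaining combination of the $y_k^n$ on the right is a convex combination of the stencil values $\{y_k^n : k\in S_i\}$.

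Next I would verify the sign pattern. From \eqref{eq:2_9}--\eqref{eq:2_10} together with $u^{\pm}=(u\pm|u|)/2$ one has $u_{i-1/2}^{+,n}+d_i^{+,n}\ge 0$ and $u_{i+1/2}^{-,n}+d_i^{-,n}\le 0$, hence by \eqref{eq:2_13} both $a_{i,i-1}^n$ and $a_{i,i+1}^n$ are nonpositive while $a_{ii}^n=-(a_{i,i-1}^n+a_{i,i+1}^n)$ coincides with the bracketed quantity in \eqref{eq:3_5}. The hypothesis \eqref{eq:3_5} therefore yields $1-\Delta t(1-\sigma)a_{ii}^n\in[0,1]$, and the two off-diagonal coefficients $-\Delta t(1-\sigma)a_{i,i\pm 1}^n$ are manifestly nonnegative. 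The three weights sum to $1-\Delta t(1-\sigma)\bigl(a_{ii}^n+a_{i,i-1}^n+a_{i,i+1}^n\bigr)=1$, so the right-hand side is a convex combination of $y_{i-1}^n,y_i^n,y_{i+1}^n$ and hence sits between $\min_{k\in S_i}y_k^n$ and $\max_{k\in S_i}y_k^n$; recombining with the omitted terms $-\Delta t(1-\sigma)\lambda_i^n y_i^n+\Delta t(1-\sigma)f_i^n$ delivers \eqref{eq:3_6} at every interior node.

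The one step I expect to be fiddly is the boundary bookkeeping at $i=1$ and $i=N$, since $A$ is truncated to an $N\times N$ block and the coefficients that would have multiplied $y_0^n$ and $y_{N+1}^n$ have been absorbed into $g_1^n$ and $g_N^n$ according to \eqref{eq:2_12}. My plan is to rewrite $g_1^n-f_1^n$ and $g_N^n-f_N^n$ explicitly in terms of those boundary values, reinsert them on the right-hand side of the displayed identity, and check that (i) the new weight on $y_0^n$ or $y_{N+1}^n$ is again nonnegative with the same $-\Delta t(1-\sigma)a_{i,\text{bdy}}^n$ expression, and (ii) the total weight sum is preserved at one because the defining relation $a_{ii}=-a_{i,i-1}-a_{i,i+1}$ still applies. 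With that alignment, the convex-combination argument of the previous paragraph extends verbatim to the enlarged stencils $\{0,1,2\}$ and $\{N-1,N,N+1\}$, completing the proof of \eqref{eq:3_6} in all cases.
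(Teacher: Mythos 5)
Your proposal is correct and follows essentially the same route as the paper: rewrite the $i$-th row of \eqref{eq:3_1} so that the sandwiched quantity equals a linear combination of the stencil values at time $t^n$ plus the reaction and source terms, check that under \eqref{eq:3_5} the weights are nonnegative and sum to one, and conclude by the convex-combination bound. Your additional bookkeeping at $i=1$ and $i=N$ (reinserting the boundary contributions hidden in $g_1^n$ and $g_N^n$ from \eqref{eq:2_12}) is a detail the paper leaves implicit, and it checks out.
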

\begin{proof} 
Let us prove the right-hand side of inequality \eqref{eq:3_6}.
 We rewrite the $i$th row of the system of equations \eqref{eq:3_1} in the form
\begin{equation}
\label{eq:3_7}
\begin{split}
& y_i^{n+1} + \Delta t \sigma \sum\limits_j {a_{ij}^{n+1}y_j^{n+1}} +\Delta t \sigma \lambda_i^{n+1} y_i^{n+1} - \Delta t \sigma g_i^{n+1} \\
 = & \left[ 1 + \Delta t (1 -\sigma) \left( \frac{ u_{i+1/2}^{- ,n} + d_i^{- ,n} }{\Delta _{i+1/2}x} - \frac{  u_{i-1/2}^{+ ,n} + d_i^{+ ,n}}{\Delta _{i-1/2}x} \right) \right] y_i^n -  \Delta t (1 -\sigma) \lambda _i^{n} y_i^n\\
+ & \Delta t (1 -\sigma) \left( \frac{ u_{i-1/2}^{+ ,n} + d_i^{+ ,n}}{\Delta _{i-1/2}x} y_{i-1}^{n} - \frac{ u_{i+1/2}^{- ,n} + d_i^{- ,n} }{\Delta _{i+1/2}x} y_{i+1}^{n} \right)
+  \Delta t (1 -\sigma) f_i^{n}  
\end{split} 
\end{equation} 		

Under condition \eqref{eq:3_5}, the first and third terms on the right-hand side of \eqref{eq:3_7} are a convex linear combination, therefore
\begin{equation}
\label{eq:3_8}
\begin{split}
& y_i^{n+1} + \Delta t \sigma \sum\limits_j {a_{ij}^{n+1}y_j^{n+1}} +\Delta t \sigma \lambda_i^{n+1} y_i^{n+1} - \Delta t \sigma g_i^{n+1} \\
\le & \mathop {\max }\limits_{k \in S_i} y_k^n - \Delta t(1 - \sigma )\lambda _i^n y_i^n + \Delta t(1 - \sigma )f_i^n
\end{split} 
\end{equation} 		
		
The lower bound \eqref{eq:3_6} is obtained in a similar way, which proves the theorem.
\end{proof}
\begin{remark} 
Under condition \eqref{eq:3_3}, the matrix $G=\left[ {E + \Delta t\;\sigma (A^{n+1} +\Lambda^{n+1})} \right]$ is a non-singular M-matrix, therefore $G^{-1}$ is a nonnegative and isotone matrix~\cite[p.52,\;2.4.3]{Ortega}, i.e. if $\boldsymbol x \preceq \boldsymbol y$, then $G^{-1}\boldsymbol x \preceq G^{-1}\boldsymbol y  $. Here $\preceq$ denotes the natural (component-wise) partial ordering on $R^N$, i.e. $\boldsymbol x \preceq \boldsymbol y$ if and only if $x_i \leq y_i$ for all $i$. Thus, the change of the vector $\boldsymbol y^{n+1}$ can be controlled by changing the right-hand side of the equation \eqref{eq:3_1}.

Inequalities \eqref{eq:3_6} hold for the right-hand side of \eqref{eq:3_1} and will be used to obtain restrictions on flux limiters in the scheme \eqref{eq:2_11}. We can consider \eqref{eq:3_6} as an analogue of discrete local maximum principle for the scheme \eqref{eq:3_1}. Note that to obtain restrictions \eqref{eq:3_6}, it is sufficient for us to know the numerical solution of \eqref{eq:3_1} at a previous time step. 
\end{remark} 


\section{Finding flux limiters} 
\label{Sec3}
To find fux limiters for scheme \eqref{eq:2_11}, we implement the approach proposed in~\cite{Kivva}. 
Our goal is to find maximal values of the flux limiters for which the solution of the difference scheme \eqref{eq:2_11} is similar to the solution of the monotone difference scheme \eqref{eq:3_1}. For this, we require that the difference scheme \eqref{eq:2_11} satisfies inequalities \eqref{eq:3_6}. Then finding the flux limiters can be considered as the following optimization problem
\begin{equation}
\label{eq:4_1}
\Im (\boldsymbol \alpha ^{\pm ,n},{\boldsymbol \alpha ^{\pm ,n+1}}) = \sum\limits_{k = n}^{n + 1} {\sum\limits_{i = 1}^N {\alpha _i^{ + ,k}} }  + \sum\limits_{k = n}^{n + 1} {\sum\limits_{i = 1}^N {\alpha _i^{ - ,k}} }  \to \mathop {\max }\limits_{{\boldsymbol\alpha ^{ \pm ,n}},{\boldsymbol\alpha ^{\pm ,n + 1}} \in U_{ad}} 
\end{equation}
subject to \eqref{eq:2_11} and
\begin{equation}
\label{eq:4_2} 
\begin{split} 
&  {\underline {\boldsymbol y}^n} + \Delta t(1 - \sigma )\boldsymbol {f^n} \\
\le \left[ {E - \Delta t(1 - \sigma ){A^n}} \right]{\boldsymbol y^n} + & \Delta t{\left( {{B^ + }{\boldsymbol \alpha ^ + } + {B^ - }{\boldsymbol \alpha ^ - }} \right)^{(\sigma )}} + \Delta t(1 - \sigma ){\boldsymbol g^n}  \\
 \le &  {\bar {\boldsymbol y}^n} + \Delta t(1 - \sigma ){\boldsymbol f^n}
\end{split}  								
\end{equation}
where $\underline {\boldsymbol y}$ and $\bar {\boldsymbol y}$ are column vectors whose components are ${\underline y} {_i} = \mathop {\min }\limits_{j \in {S_i}} y_j$ and ${\bar y_i} = \mathop {\max }\limits_{j \in {S_i}} y_j$.
$U_{ad}$ is the set of vectors ${\boldsymbol{\alpha} ^{\pm,n}},{\boldsymbol{\alpha} ^{\pm,n+1}}$, which is defined as the Cartesian product of $N$-vectors
\begin{equation}
\label{eq:4_3}
  U^{ad}= \left\lbrace {\left( {\boldsymbol{\alpha}^{\pm,n}, \boldsymbol{\alpha}^{\pm,n+1} } \right) \in \left( R^N \right)^4 : \quad 0\leq \alpha_i^{\pm,k}\leq 1, \;\; k=n,n+1  }\right\rbrace 
\end{equation}
 
Note that for $\sigma  = 0$ the optimization problem \eqref{eq:4_1}-\eqref{eq:4_3} and \eqref{eq:2_11} is a linear programming problem, and for $\sigma  > 0$ it is a nonlinear programming problem.

To solve the nonlinear optimization problem \eqref{eq:4_1}-\eqref{eq:4_3} and \eqref{eq:2_11} in one time step, we use the following iterative process: 
\begin{enumerate}[\bfseries   Step 1.]
\item \label {it:1}
 Initialize positive numbers $\delta ,{\varepsilon _1},{\varepsilon _2} > 0$. Set $p = 0$, ${\boldsymbol{y}^{n+1,0}} = {\boldsymbol{y}^n}$, ${\boldsymbol{\alpha} ^{\pm,n,0}},{\boldsymbol{\alpha} ^{\pm,n+1,0}} = 0$.
\item \label {it:2}
 Find the solution ${\boldsymbol{\alpha} ^{\pm,n,p+1}},{\boldsymbol{\alpha} ^{\pm,n+1,p+1}}$ of the following linear programming problem
\begin{equation}
\label{eq:4_4}
\Im (\boldsymbol \alpha ^{\pm ,n,p+1},{\boldsymbol \alpha ^{\pm ,n+1,p+1}})   \to \mathop {\max }\limits_{{\boldsymbol\alpha ^{ \pm ,n,p+1}},{\boldsymbol\alpha ^{\pm, n+1, p+1}} \in U_{ad}} 			
\end{equation}
\begin{equation}
\label{eq:4_5}
\begin{split} 
& \; \mathop {\min }\limits_{j \in {S_i}} y_j^n - y_i^n + \Delta t\,(1 - \sigma )  \sum\limits_{j \ne i} {a_{ij}^n} \left( {y_j^n - y_i^n} \right) \\
 \le & \; \Delta t \, (1 - \sigma ) \left( b_i^{+,n} {\alpha _i ^{+,n,p+1}} +  b_i^{-,n} {\alpha _i ^{-,n,p+1}} \right) \\
+ & \; \Delta t\,\sigma \left( b_i^{+,n+1,p}{\alpha _i ^{+,n+1,p+1}} + b_i^{-,n+1,p}{\alpha _i ^{-,n+1,p+1}} \right)  \\
\le & \; \mathop {\max }\limits_{j \in {S_i}} y_j^n - y_i^n + \Delta t\,(1 - \sigma )  \sum\limits_{j \ne i} {a_{ij}^n} \left( {y_j^n - y_i^n} \right)
\end{split}  
\end{equation}
\item \label {it:3}
 For the ${\boldsymbol{\alpha} ^{\pm,n,p+1}},{\boldsymbol{\alpha} ^{\pm,n+1,p+1}}$, find $y_i^{n+1,p+1}$ from the system of linear equations
\begin{equation}
\label{eq:4_6}
\begin{split} 
& \left[ {E + \Delta t \sigma \left( A^{n+1} + \Lambda ^{n+1} \right) } \right]{\boldsymbol{y}^{n+1,p+1}}  = \left[ {E - \Delta t (1 - \sigma )\left( A^n + \Lambda^n  \right)} \right]{\boldsymbol{y}^n} \\ 
& + \Delta t\, \left[ \left( B^{+,p}{\boldsymbol{\alpha} ^{+,p+1}} \right) ^{(\sigma)} + \left( B^{-,p}{\boldsymbol{\alpha} ^{-,p+1}} \right) ^{(\sigma)} \right] + \Delta t\,{\boldsymbol{g}^{(\sigma )}}  
\end{split} 	
\end{equation}
\item \label {it:4}
 Algorithm stop criterion
\begin{equation}
\label{eq:4_7}
\begin{split} 
& \mathop {\max} \limits_i
\frac{{\left| {y_i^{n + 1,p + 1} - y_i^{n + 1,p}} \right|}}{{\max \left( {\delta ,\left| {y_i^{n + 1,p + 1}} \right|} \right)}} < {\varepsilon _1},   \\
& \left|  \Im \left( \boldsymbol \alpha^{\pm,n,p+1}, \boldsymbol \alpha^{\pm,n+1,p+1} \right) -  \Im \left( \boldsymbol \alpha^{\pm,n,p}, \boldsymbol \alpha^{\pm,n+1,p} \right) \right| < {\varepsilon _2} 		
\end{split} 	
\end{equation}
If conditions \eqref{eq:4_7} hold, then ${\boldsymbol{y}^{n+1}} = {\boldsymbol{y}^{n+1,p+1}}$. Otherwise, set $p = p+1$ and go to {\bf Step \ref{it:2}}.
 
\end{enumerate} 

The solvability of the linear programming problem \eqref{eq:4_4}-\eqref{eq:4_5} is considered in the theorem below. 

\begin{theorem}
\label{th:4_1}
Assume that $\Delta t \,$ satisfies \eqref{eq:3_3}-\eqref{eq:3_5}, then the linear programming problem \eqref{eq:4_4}-\eqref{eq:4_5} is solvable.
\end{theorem}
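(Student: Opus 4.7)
The plan is to prove solvability by exhibiting a feasible point for \eqref{eq:4_4}--\eqref{eq:4_5} and then appealing to compactness. The admissible set $U_{ad}$ is the compact cube $[0,1]^{4N}$, and the objective $\Im$ is continuous and linear, so once the feasible region is shown to be non-empty the maximum is attained.

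The natural candidate is the zero vector $\boldsymbol\alpha^{\pm,n,p+1} = \boldsymbol\alpha^{\pm,n+1,p+1} = 0$, which reduces the hybrid scheme to the monotone scheme \eqref{eq:3_1}. At this choice the middle expression of \eqref{eq:4_5} vanishes, and both chain inequalities reduce to $\min_{j\in S_i} y_j^n \le y_i^n - \Delta t(1-\sigma) \sum_{j\ne i} a_{ij}^n(y_j^n - y_i^n) \le \max_{j\in S_i} y_j^n$. Using the row-sum identity $a_{ii}^n = -a_{i,i-1}^n - a_{i,i+1}^n$ from \eqref{eq:2_13}, I would rewrite the middle expression as $(1 - \Delta t(1-\sigma) a_{ii}^n)\, y_i^n - \Delta t(1-\sigma) a_{i,i-1}^n y_{i-1}^n - \Delta t(1-\sigma) a_{i,i+1}^n y_{i+1}^n$, a linear combination whose three weights sum to one. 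The off-diagonal weights are non-negative because the sign conventions in \eqref{eq:2_9}--\eqref{eq:2_13} force $a_{i,i\pm 1}^n \le 0$, while the diagonal weight $1 - \Delta t(1-\sigma) a_{ii}^n$ is non-negative precisely under hypothesis \eqref{eq:3_5}. Hence the middle expression is a convex combination of $\{y_{i-1}^n,\, y_i^n,\, y_{i+1}^n\} \subset \{y_j^n : j \in S_i\}$, and the required bounds follow automatically, so $\boldsymbol\alpha = 0$ lies in the feasible set.

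The only genuine content of the proof is recognizing that \eqref{eq:4_5} evaluated at $\boldsymbol\alpha = 0$ is exactly the discrete local maximum principle for the monotone scheme, and that this principle (Theorem \ref{th:3_2}) relies on precisely the CFL-type bound \eqref{eq:3_5}. Hypotheses \eqref{eq:3_3}--\eqref{eq:3_4} do not enter the feasibility argument per se; they merely ensure that the surrounding iteration in Step \ref{it:2} is well posed, so that the data $y^n$, $y^{n+1,p}$, and $b_i^{\pm,\cdot}$ entering the constraints are meaningful. Consequently there is no significant obstacle: solvability follows immediately from the convex-combination identity above together with the compactness of $U_{ad}$.
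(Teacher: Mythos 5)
Your proof is correct and follows essentially the same route as the paper: the objective is bounded on $U_{ad}$ and the feasible set is non-empty because the zero vector satisfies \eqref{eq:4_5}, which is exactly the content of the paper's two-line argument. You merely make explicit the convex-combination computation (already contained in the proof of Theorem~\ref{th:3_2}) showing that feasibility of $\boldsymbol\alpha=0$ rests only on \eqref{eq:3_5}, a detail the paper leaves implicit.
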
 
\begin{proof}
 To prove that problem \eqref{eq:4_4}-\eqref{eq:4_5} is solvable, it is sufficient to show that the objective function $\Im ({\boldsymbol{\alpha} ^{\pm,n}},{\boldsymbol{\alpha} ^{\pm,n+1}})$ is bounded and the feasible set is non-empty. The boundedness of the function \eqref{eq:4_1} follows from the boundedness of the vectors ${\boldsymbol{\alpha} ^{\pm,n}}$ and ${\boldsymbol{\alpha} ^{\pm,n+1}}$ whose coordinates vary from zero to one. On the other hand, if the hypothesis of the theorem is true, then the zero vectors ${\boldsymbol{\alpha} ^{pm,n}}$ and ${\boldsymbol{\alpha} ^{\pm,n+1}}$ satisfy the system of inequalities \eqref{eq:4_5}.
 
 This completes the proof of the theorem. 
\end{proof}

\section{Flux limiter design}
\label{Sec4}
In the iterative process described in the previous section, the flux limiters are found by solving the linear programming problem \eqref{eq:4_4}-\eqref{eq:4_5}. Solving a linear programming problem requires additional computational cost. Therefore, in the iterative process at {\bf Step \ref{it:2}}, instead of \eqref{eq:4_4}-\eqref{eq:4_5}, we use its approximate solution.

The purpose of this section is to find a nontrivial approximate solution to the linear programming problem \eqref{eq:4_4}-\eqref{eq:4_5}.
Nonzero $\left( {\boldsymbol{\alpha} ^{\pm,n}},{\boldsymbol{\alpha} ^{\pm,n+1}} \right) \in {U_{ad}}$ satisfy the system of inequalities \eqref{eq:4_5}, and, omitting the iteration number, we rewrite the latter in the form 
\begin{equation}
\label{eq:5_1} 
\begin{split} 
 (1 - \sigma ) & \left( b_i^{+,n} {\alpha _i ^{+,n}} +  b_i^{-,n} {\alpha _i ^{-,n}} \right) 
+ \sigma \left( b_i^{+,n+1}{\alpha _i ^{+,n+1}} + b_i^{-,n+1}{\alpha _i ^{-,n+1}} \right)  \\
& \le \; \frac{1}{\Delta t} \left(  \mathop {\max }\limits_{j \in {S_i}} y_j^n - y_i^n \right) + (1 - \sigma )  \sum\limits_{j \ne i} {a_{ij}^n} \left( {y_j^n - y_i^n} \right)
\end{split}
\end{equation}
\begin{equation}
\label{eq:5_2} 
\begin{split} 
 (1 - \sigma ) & \left( b_i^{+,n} {\alpha _i ^{+,n}} +  b_i^{-,n} {\alpha _i ^{-,n}} \right) 
+ \sigma \left( b_i^{+,n+1}{\alpha _i ^{+,n+1}} + b_i^{-,n+1}{\alpha _i ^{-,n+1}} \right)  \\
&  \ge \; \frac{1}{\Delta t} \left( \mathop {\min }\limits_{j \in {S_i}} y_j^n - y_i^n \right) + (1 - \sigma )  \sum\limits_{j \ne i} {a_{ij}^n} \left( {y_j^n - y_i^n} \right)
\end{split}
\end{equation}
\begin{equation}
\label{eq:5_3}
 0 \le \alpha _i^{\pm,n} \le 1, \quad  0 \le \alpha _i^{\pm,n+1} \le 1		   						
\end{equation}

For the left-hand sides of inequalities \eqref{eq:5_1} and \eqref{eq:5_2}, the following estimates are valid
\begin{equation}
\begin{split}
\label{eq:5_4}  
(1 - \sigma ) &  \left( b_i^{+,n} {\alpha _i ^{+,n}} +  b_i^{-,n} {\alpha _i ^{-,n}} \right) 
+ \sigma \left( b_i^{+,n+1}{\alpha _i ^{+,n+1}} + b_i^{-,n+1}{\alpha _i ^{-,n+1}} \right)  \\
& \le \alpha _i^{+,max} \left[ (1-\sigma) \left( \max (0,b_i^{+,n}) + \max (0,b_i^{-,n}) \right) \right. \\
& \qquad \quad + \left. \sigma \left( \max (0,b_i^{+,n+1}) + \max (0,b_i^{-,n+1}) \right) \right]
\end{split} 
\end{equation}
\begin{equation}
\begin{split}
\label{eq:5_5}  
(1 - \sigma ) &  \left( b_i^{+,n} {\alpha _i ^{+,n}} +  b_i^{-,n} {\alpha _i ^{-,n}} \right) 
+ \sigma \left( b_i^{+,n+1}{\alpha _i ^{+,n+1}} + b_i^{-,n+1}{\alpha _i ^{-,n+1}} \right)  \\
& \ge \alpha _i^{-,max} \left[ (1-\sigma) \left( \min (0,b_i^{+,n}) + \min (0,b_i^{-,n}) \right) \right. \\
& \qquad \quad + \left. \sigma \left( \min (0,b_i^{+,n+1}) + \min (0,b_i^{-,n+1}) \right) \right]
\end{split} 
\end{equation}
where $ \alpha _i^{+,max}$ and $ \alpha _i^{-,max}$ are the maximums of the components $\alpha_i^{\pm,n}$ and $\alpha _i^{\pm,n+1}$ corresponding to the non-negative and non-positive coefficients $b_i^{\pm}$ on the left-hand sides of \eqref{eq:5_4} and \eqref{eq:5_5}, respectively. 
 
  Substituting \eqref{eq:5_4} into \eqref{eq:5_1}, and \eqref{eq:5_5} into \eqref{eq:5_2} yields
\begin{equation}
\label{eq:5_6}
 \alpha _i^{\pm,k} = \left\{ {\begin{array}{*{20}{c}} {R_i^ +  \qquad {\rm{    }}b_i^{\pm,k} > 0} \\
{R_i^ -  \qquad {\rm{    }}b_i^{\pm,k} < 0}\end{array}} \right. \qquad k=n,n+1								
\end{equation}
where 
\begin{equation}
\label{eq:5_7}
 R_i^ \pm  = \min \left( 1, \alpha _i^{\pm,max} \right) =\min \left(1,{Q_i^ \pm }/{P_i^ \pm } \right) 					
\end{equation}
\begin{equation}
\label{eq:5_8}
 Q_i^ +  = \frac{1}{{\Delta t}}\left( {\mathop {\max }\limits_{j \in {S_i}} y_j^n - y_i^n} \right) + (1 - \sigma )\sum\limits_{j \ne i} {a_{ij}^n} \left( {y_j^n - y_i^n} \right)  						
\end{equation}
\begin{equation}
\label{eq:5_9}
 Q_i^ -  = \frac{1}{{\Delta t}}\left( {\mathop {\min }\limits_{j \in {S_i}} y_j^n - y_i^n} \right) + (1 - \sigma )\sum\limits_{j \ne i} {a_{ij}^n} \left( {y_j^n - y_i^n} \right) 						
\end{equation}
\begin{equation}
\label{eq:5_10} 
\begin{split} 
 P_i^+ & = (1-\sigma) \left( \max (0,b_i^{+,n}) + \max (0,b_i^{-,n}) \right) \\
& +\sigma \left( \max (0,b_i^{+,n+1}) + \max (0,b_i^{-,n+1}) \right)   
\end{split} 			
\end{equation}
\begin{equation}
\label{eq:5_11}
\begin{split} 
 P_i^ - & = (1-\sigma) \left( \min (0,b_i^{+,n}) + \min (0,b_i^{-,n}) \right)  \\
& + \sigma \left( \min (0,b_i^{+,n+1}) + \min (0,b_i^{-,n+1}) \right) 			
\end{split} 			
\end{equation}
\begin{remark}
Note that similarly, the flux correction formulas can be obtained  for the convex combination of \eqref{eq:2_2} and \eqref{eq:2_4}, which approximates the convective term in equation \eqref{eq:1_1}. This approach is also applicable for schemes with a high-order approximation of the convective-diffusive flux. Moreover, this method and formulas \eqref{eq:5_6}-\eqref{eq:5_11} can be easily generalized to the multidimensional case.
\end{remark} 

 
\section{Numerical Results} \label{Sec5}
We conclude the paper with a number of numerical tests. The purpose of this section is to compare the results of the difference schemes considered in the paper. Below, we abbreviate by NDVL and NDVA the difference scheme \eqref{eq:2_11}, flux limiters of which are exact or approximate solutions of the linear programming problem \eqref{eq:4_4}-\eqref{eq:4_5}. We also use DIV notation for the difference scheme, the flux correction of which is based on the divergent part of the convective flux \eqref{eq:2_5_4}.

In our calculations, we apply the GLPK (GNU Linear Programming Kit) v.4.65 set of routines for solving linear programming, mixed integer programming, and other related problem. GLPK is available at \href{url}{https://www.gnu.org/software/glpk/}.


\subsection{One-Dimensional Advection}  \label{Sec5_1}

\begin{figure*}[!b]
\centering
  \includegraphics[width=3.9cm]{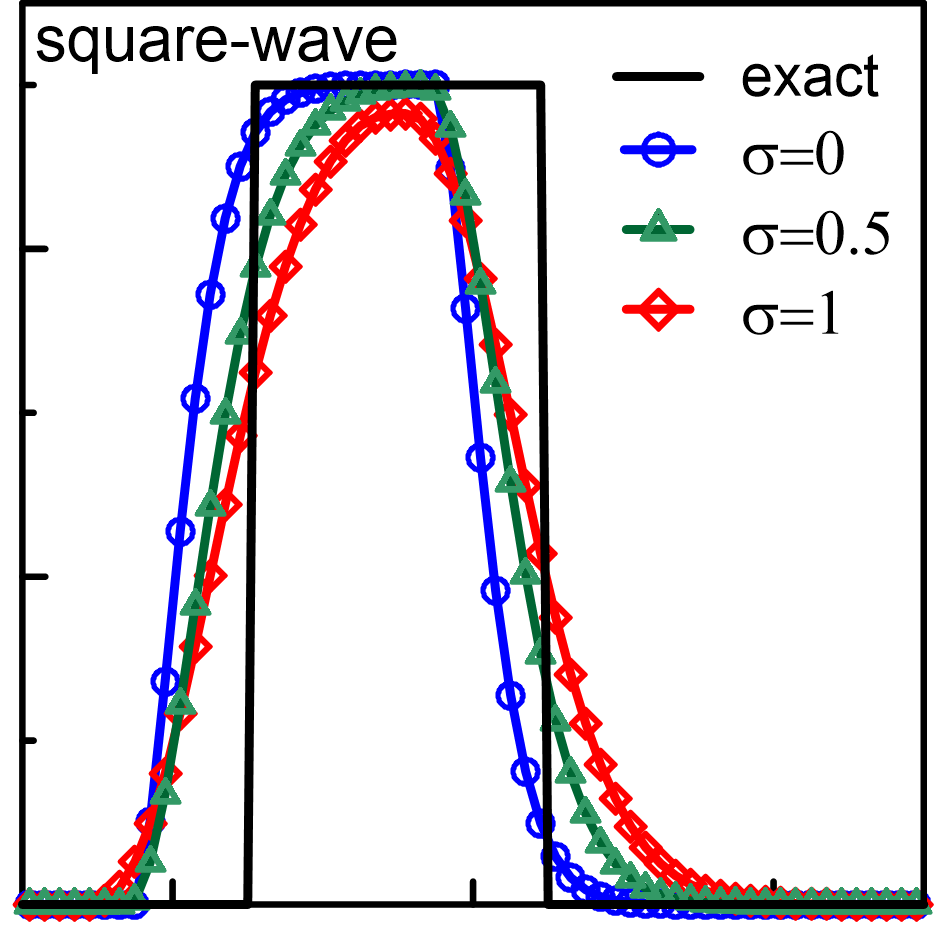}  
  \includegraphics[width=3.9cm]{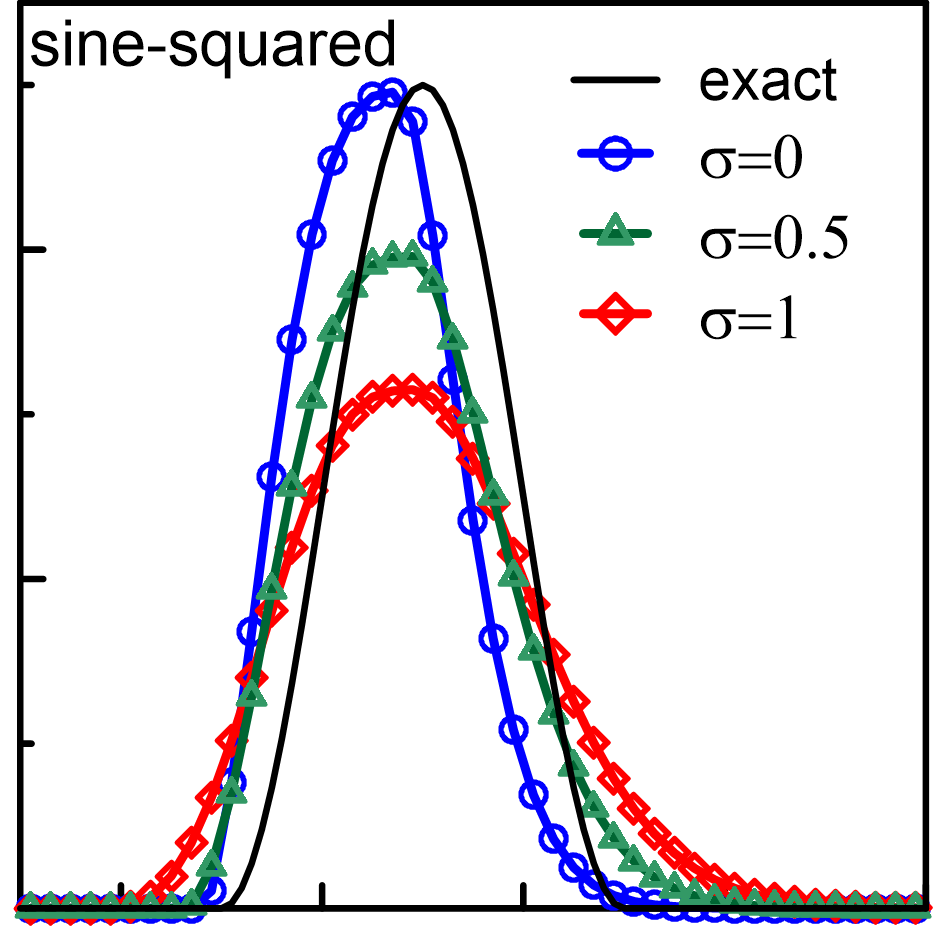}  
  \includegraphics[width=3.9cm]{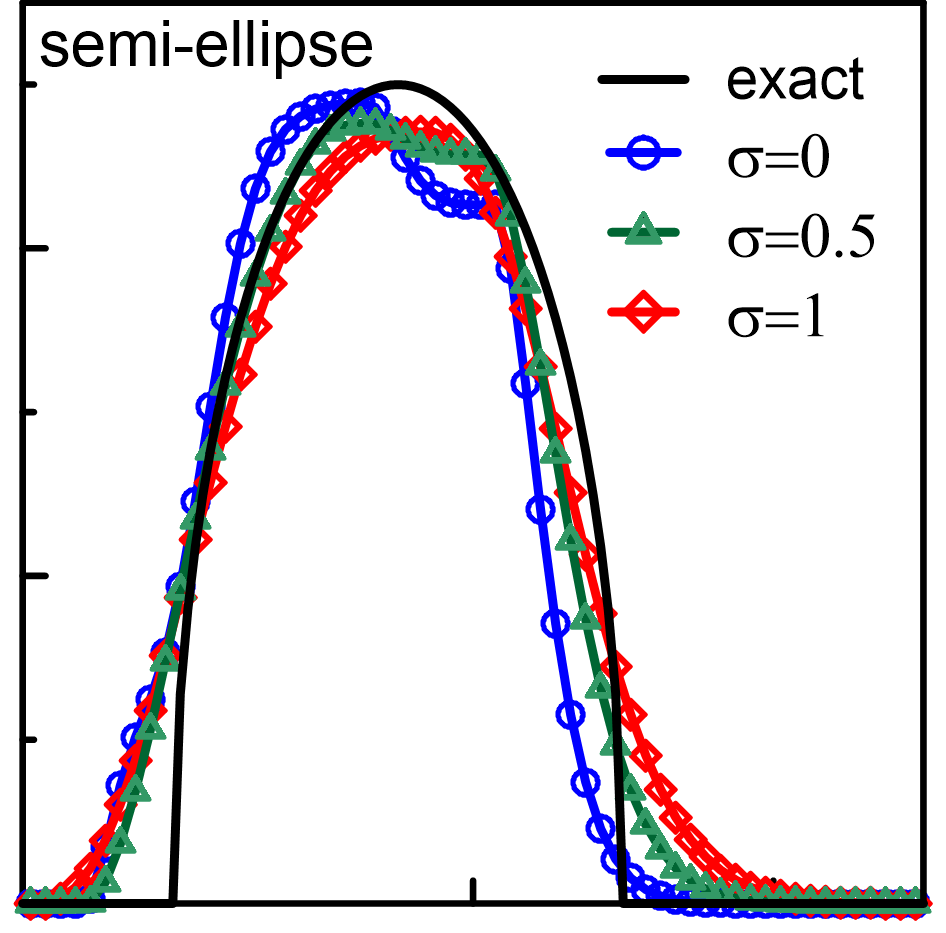} 
\begin{tabular}[t]{cc} 
  \includegraphics[width=3.9cm]{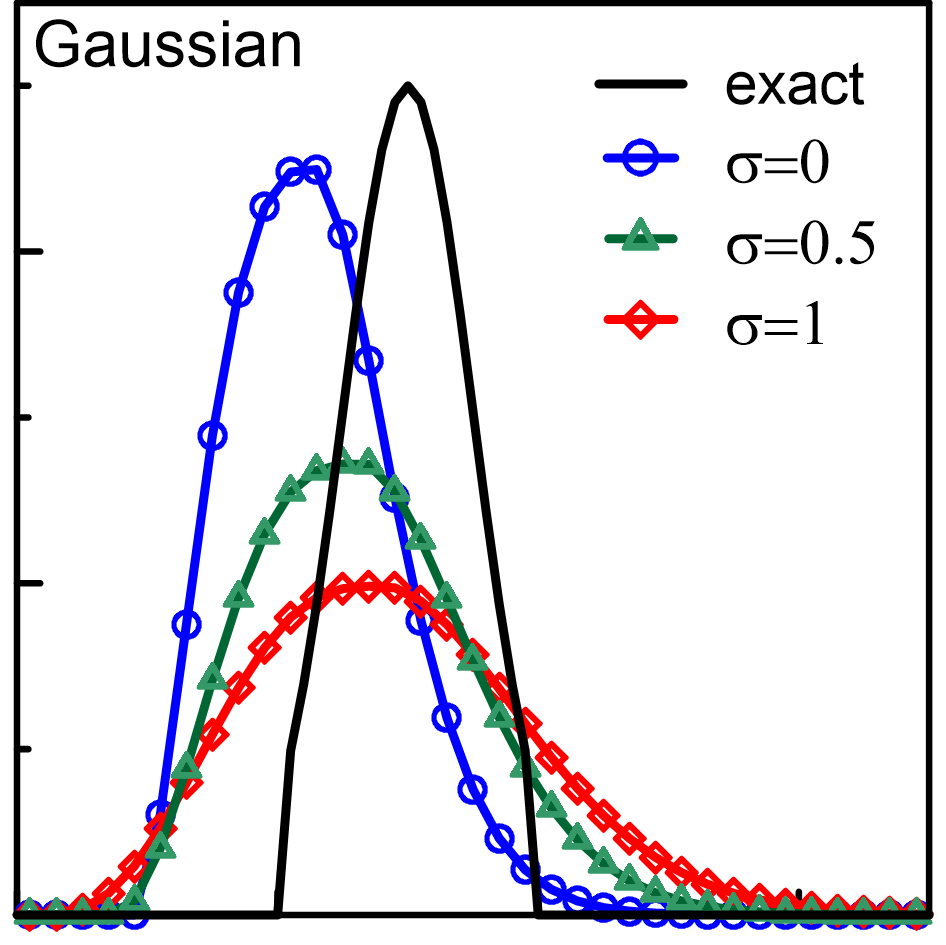}  &
  \includegraphics[width=3.9cm]{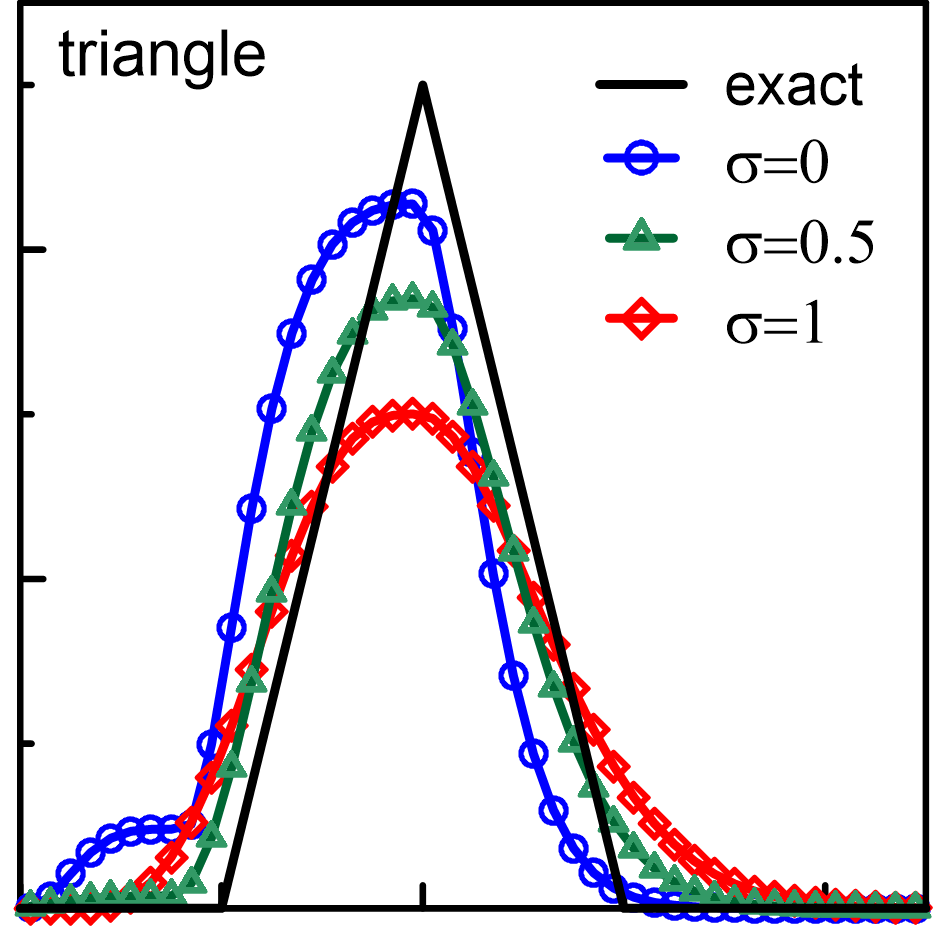}
\end{tabular}   
\caption{Numerical results for the advection test \eqref{eq:6_1} with the NDVL scheme for various weights $\sigma $ .  Flux limiters are calculated using the linear programming problem  \eqref{eq:4_4}-\eqref{eq:4_5}}
\label{fig:1}       
\end{figure*}

\begin{figure*}[!t]
\centering
  \includegraphics[width=3.9cm]{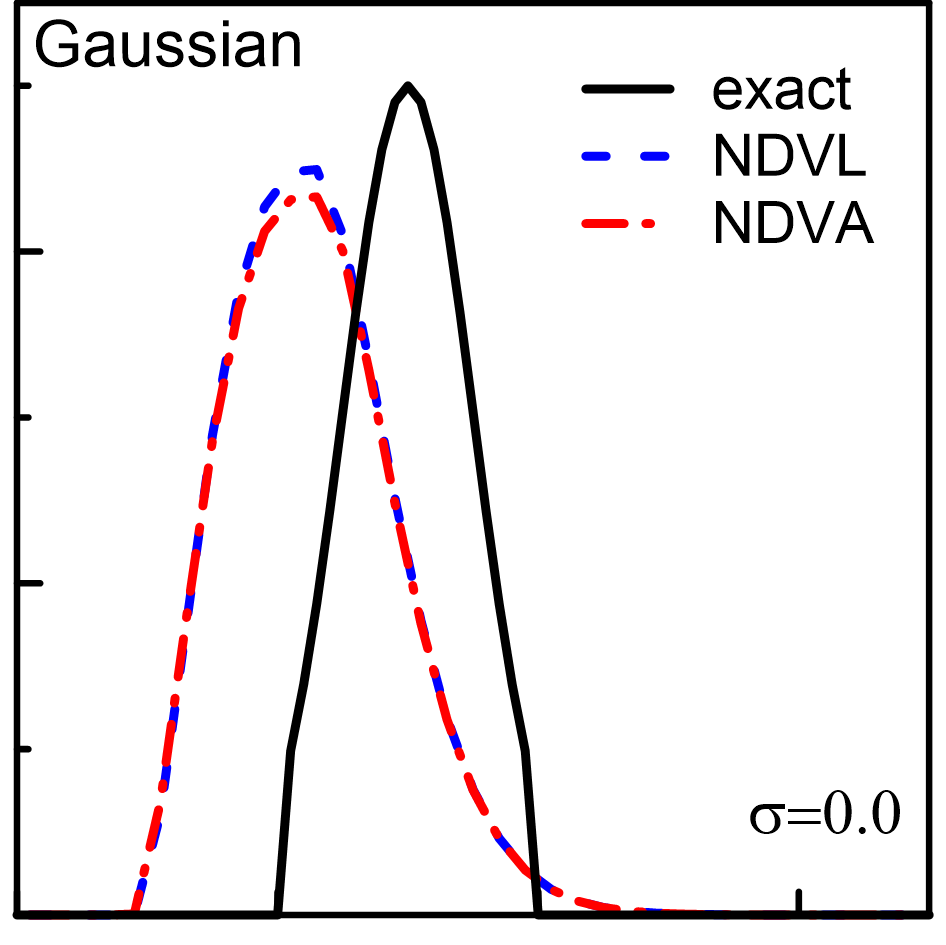}  
  \includegraphics[width=3.9cm]{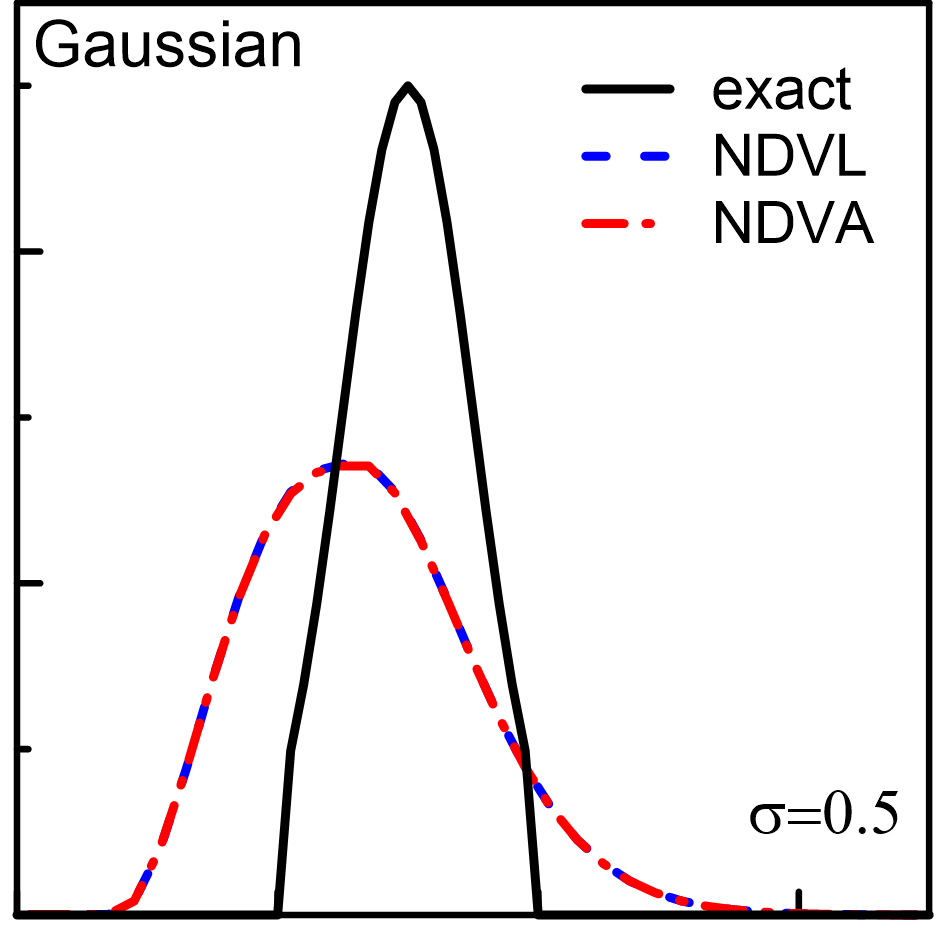}  
  \includegraphics[width=3.9cm]{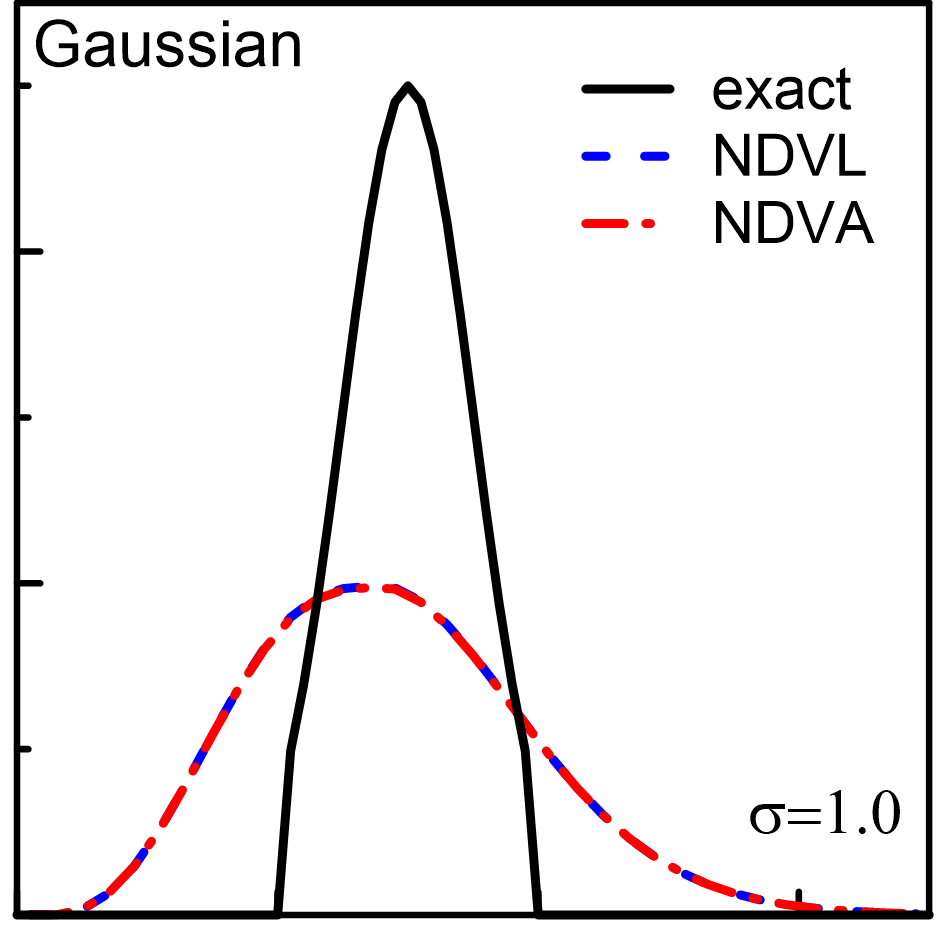} 
\caption{Comparison of the results for the advection test \eqref{eq:6_1} with the NDVL and NDVA schemes for $\sigma =0.5 $. }
\label{fig:2}       
\end{figure*}

We consider the one-dimensional advection test of Leonard et al. \cite{b31} on the uniform grid with $\Delta x = 0.01$ and constant velocity. The initial scalar profile consists of five different shapes: square wave, sine-squared, semi-ellipse, Gaussian, and triangle. The initial profile is specified as
\begin{equation}
\label{eq:6_1}
 y({x_i}) =  \left\{ {\begin{array}{*{20}{l}}
1&{{\rm if} \;\; 0.05 \le {x_i} \le 0.25}&{{\rm{(square}}\;{\rm{wave)}}} \\
{{{\sin }^2}\left[ {\dfrac{\pi }{{0.2}}\left( {{x_i} - 0.85} \right)} \right]}&{{\rm if} \;\; 0.85 \le {x_i} \le 1.05}&{{\rm{(sine - squared)}}} \\
{\sqrt {1 - {{\left[ {\dfrac{1}{{15\Delta x}}\left( {{x_i} - 1.75} \right)} \right]}^2}} }& {{\rm if} \;\; 1.6 \le {x_i} \le 1.9}&{{\rm{(semi - ellipse)}}} \\
{\exp \left[ { - \dfrac{1}{{2{\gamma ^2}}}{{\left( {{x_i} - 2.65} \right)}^2}} \right]}& {{\rm if} \;\; 2.6 \le {x_i} \le 2.7}&{{\rm{(Gaussian)}}} \\
{10\left( {{x_i} - 3.3} \right)}& {{\rm if} \;\; 3.3 \le {x_i} \le 3.4}&{{\rm{(triangle)}}} \\
{1.0 - 10\left( {{x_i} - 3.4} \right)}& {{\rm if} \;\; 3.4 \le {x_i} \le 3.5}&{} \\
0&{{\rm{otherwise}}}&{}
\end{array}} \right.   				
\end{equation} 
The standard deviation for the Gaussian profile is specified as $\gamma  = 2.5$.

Numerical results with the NDVL scheme after 400 time steps at a Courant number of 0.2 are shown in Fig.~\ref{fig:1}. The flux limiters are calculated using the linear programming problem \eqref{eq:4_4}-\eqref{eq:4_5}. 
At the right edge of the semi-ellipse for $\sigma=0$ and $\sigma=0.5$, we observe the well-known "terracing" phenomenon, which is  a nonlinear effect of residual phase errors. It is shown in~\cite{b36},\cite{b3} that high-order FCT methods (above fourth-order) significantly reduce phase errors and that selective adding diffusion can also reduce terracing.
In the numerical solution of the implicit scheme, there is no terracing. The implicit scheme is more diffusive than the previous two, and its numerical solution is also more diffusive.

\begin{figure*}[!b]
\centering
  \includegraphics[width=3.9cm]{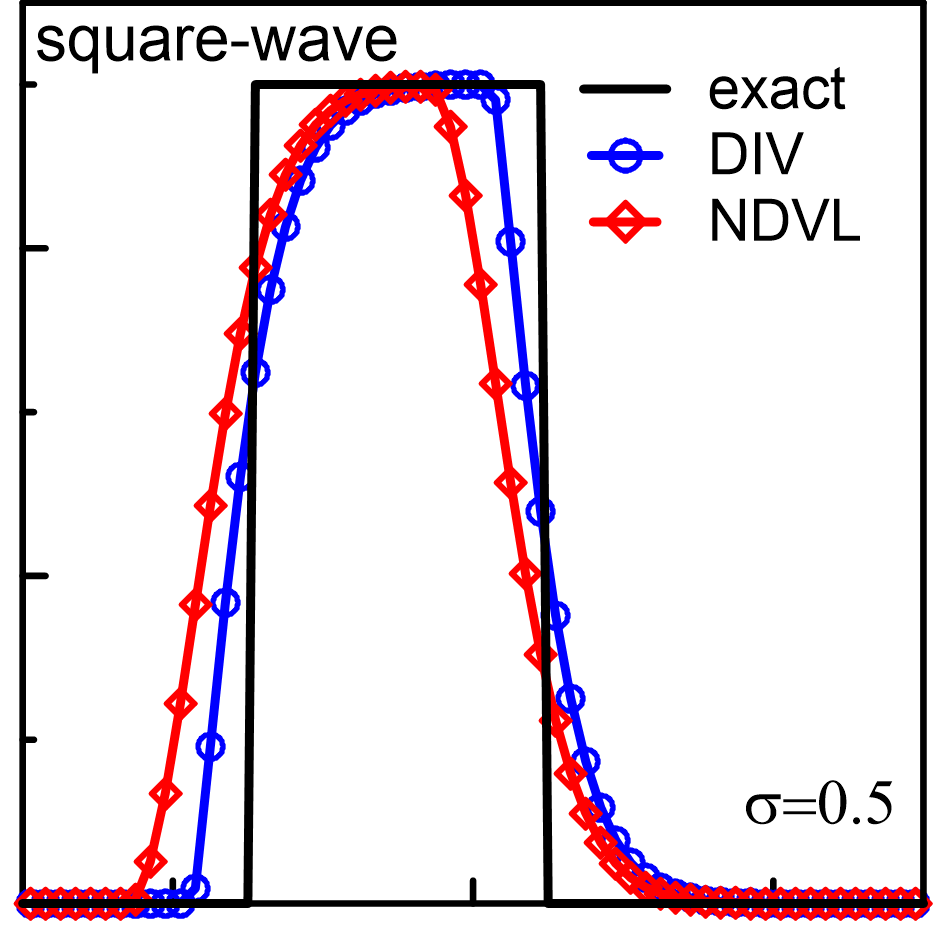} 
  \includegraphics[width=3.9cm]{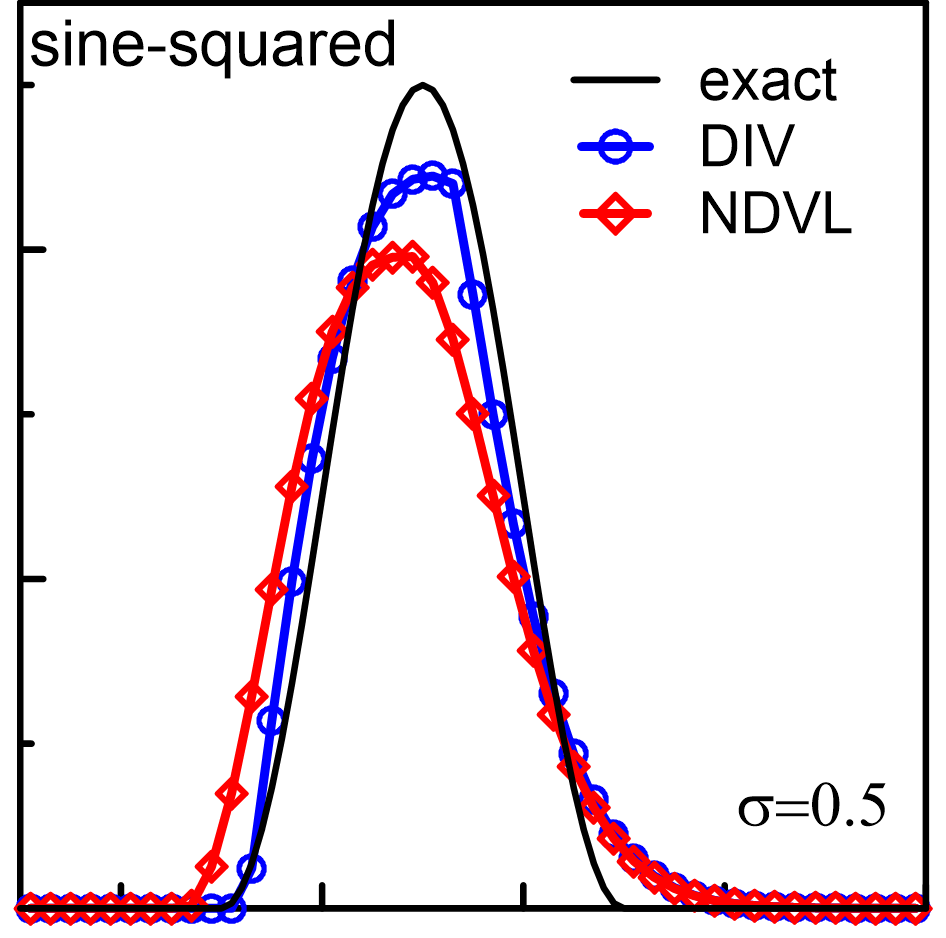} 
  \includegraphics[width=3.9cm]{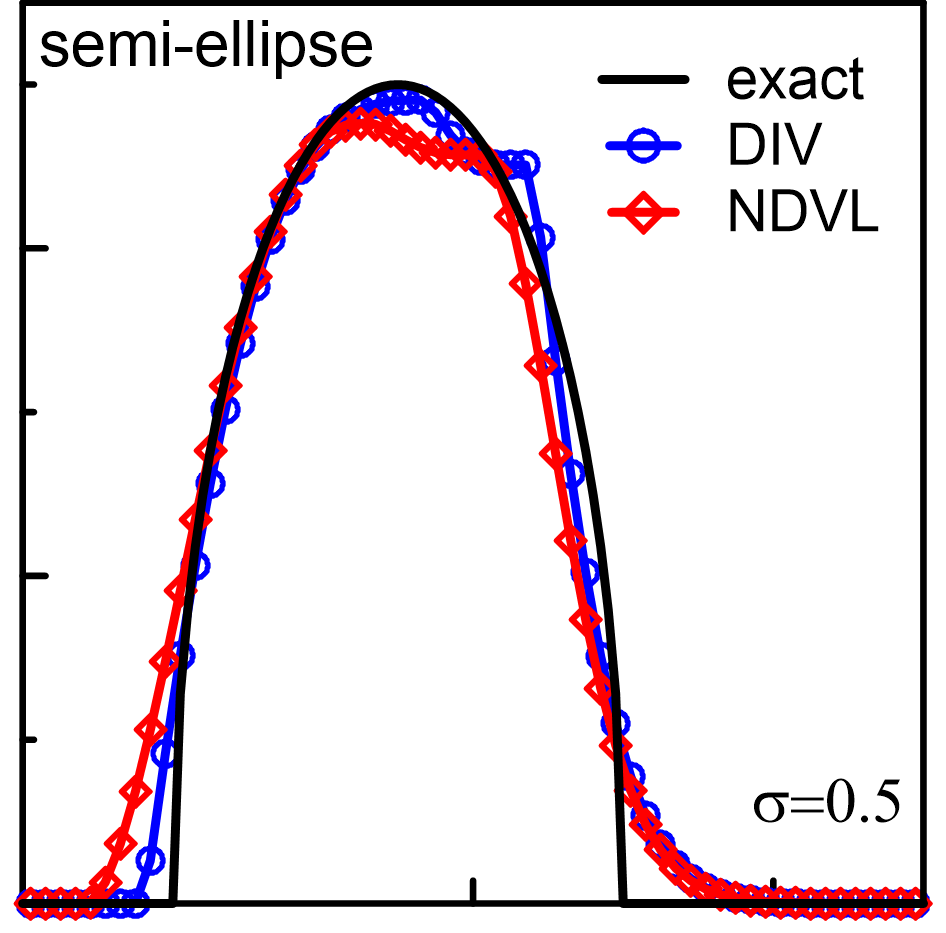}
\begin{tabular}[t]{cc} 
  \includegraphics[height=3.8cm]{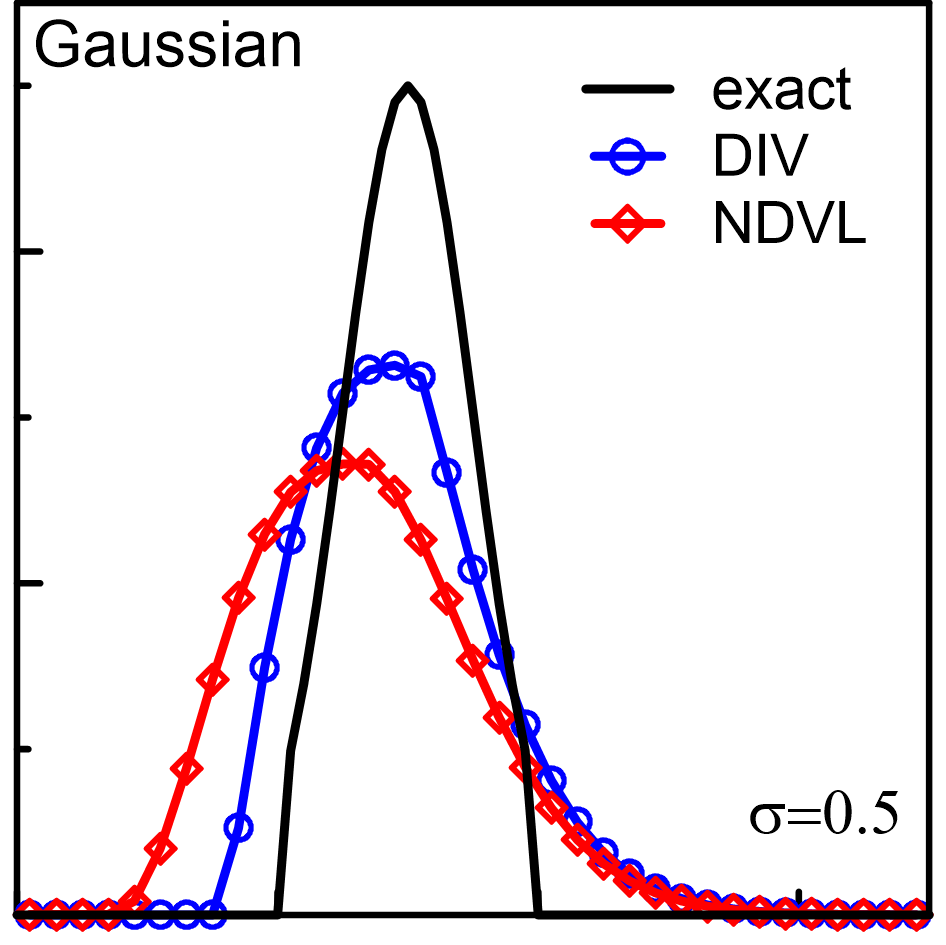} &
  \includegraphics[height=3.8cm]{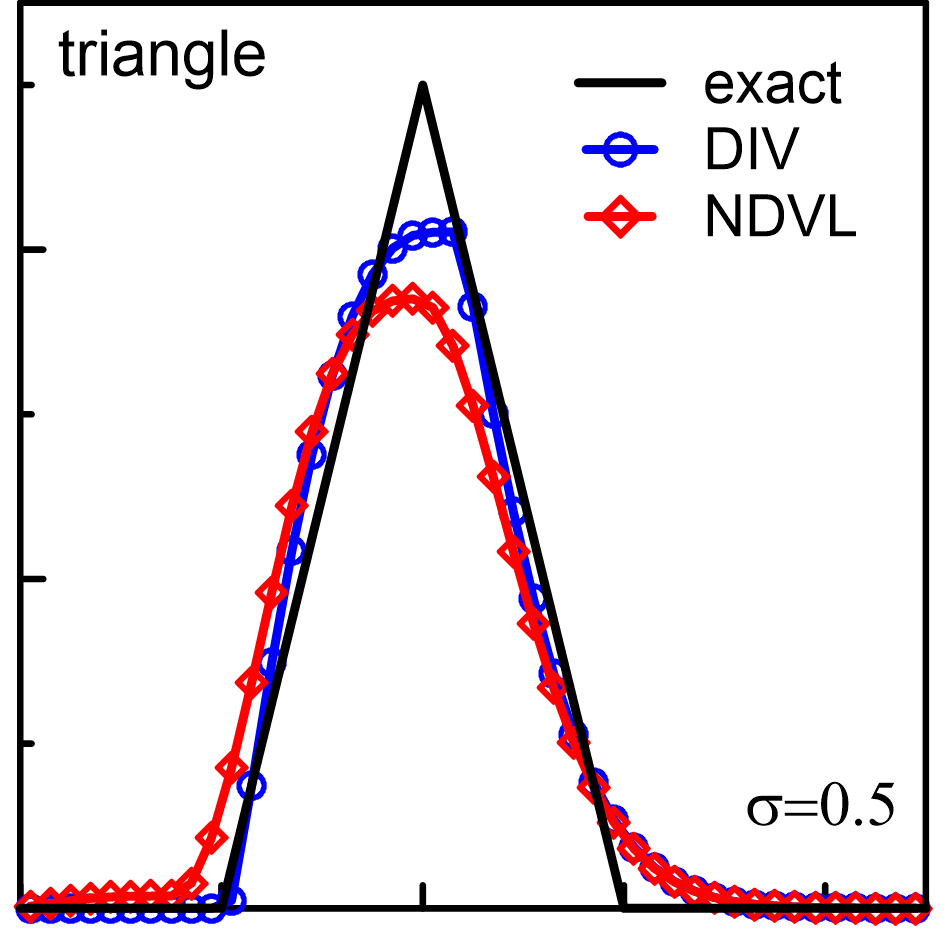} 
\end{tabular}   
\caption{Comparison of the numerical results for the advection test \eqref{eq:6_1} with the DIV and NDVL schemes for $\sigma=0.5$.}
\label{fig:3}       
\end{figure*}

\begin{table} [!t]
\caption{$L^1 $–norm of errors and the maximum values of the numerical results for the advection test \eqref{eq:6_1} with the DIV, NDVL and NDVA schemes.}
\label{tab1}       
\begin{tabular}{llllllll}
\hline\noalign{\smallskip}
&  &  \multicolumn{2}{c}{DIV} & \multicolumn{2}{c}{NDVL} & \multicolumn{2}{c}{NDVA}  \\
 \cline{3-4} \cline{5-6} \cline{7-8} \noalign{\smallskip}
 & $\sigma$  &  $L^1$ error & $y_{max}$ & $L^1$ error & $y_{max}$  & $L^1$ error & $y_{max}$   \\
\noalign{\smallskip}\hline\noalign{\smallskip}
\multirow{3}{*}{wav}  
	& 0.0 &	2.1811$\times 10^{-2}$ & 1.0000 & 8.1136$\times 10^{-2}$ & 1.0000 & 8.1182$\times 10^{-2}$ & 1.0000 \\
	& 0.5 &	4.3933$\times 10^{-2}$ & 0.9997 & 6.5511$\times 10^{-2}$ & 0.9976 & 6.5527$\times 10^{-2}$ & 0.9973 \\
	& 1.0 &	6.9477$\times 10^{-2}$ & 0.9843 & 7.6861$\times 10^{-2}$ & 0.9653 & 7.6774$\times 10^{-2}$ & 0.9650 \\
\hline	\noalign{\smallskip}
\multirow{3}{*}{sine}
	& 0.0 &	1.6883$\times 10^{-2}$ & 0.9938 & 4.6661$\times 10^{-2}$ & 0.9913 & 4.7052$\times 10^{-2}$ & 0.9766 \\
	& 0.5 &	1.6423$\times 10^{-2}$ & 0.8895 & 3.2650$\times 10^{-2}$ & 0.7917 & 3.2759$\times 10^{-2}$ & 0.7899 \\
	& 1.0 &	3.9029$\times 10^{-2}$ & 0.7043 & 4.5601$\times 10^{-2}$ & 0.6300 & 4.5694$\times 10^{-2}$ & 0.6286 \\
\hline	\noalign{\smallskip}
\multirow{3}{*}{elp}
	& 0.0 &	1.7926$\times 10^{-2}$ & 0.9973 & 4.9044$\times 10^{-2}$ & 0.9774 & 4.8959$\times 10^{-2}$ & 0.9775 \\
	& 0.5 &	1.7913$\times 10^{-2}$ & 0.9810 & 2.8675$\times 10^{-2}$ & 0.9526 & 2.8660$\times 10^{-2}$ & 0.9524 \\
	& 1.0 &	3.6078$\times 10^{-2}$ & 0.9601 & 3.9624$\times 10^{-2}$ & 0.9421 & 3.9603$\times 10^{-2}$ & 0.9422 \\
\hline	\noalign{\smallskip}
\multirow{3}{*}{gau}
	& 0.0 &	1.3639$\times 10^{-2}$ & 0.9764 & 6.9049$\times 10^{-2}$ & 0.8991 & 6.8116$\times 10^{-2}$ & 0.8661 \\
	& 0.5 &	2.7592$\times 10^{-2}$ & 0.6629 & 4.5303$\times 10^{-2}$ & 0.5438 & 4.5337$\times 10^{-2}$ & 0.5417 \\ 
	& 1.0 &	4.3681$\times 10^{-2}$ & 0.4828 & 4.8852$\times 10^{-2}$ & 0.3965 & 4.8882$\times 10^{-2}$ & 0.3949 \\
\hline	\noalign{\smallskip}
\multirow{3}{*}{tri}
	& 0.0 &	2.5205$\times 10^{-2}$ & 0.9389 & 4.8921$\times 10^{-2}$ & 0.8555 & 4.8870$\times 10^{-2}$ & 0.8517 \\
	& 0.5 &	1.3843$\times 10^{-2}$ & 0.8216 & 2.6126$\times 10^{-2}$ & 0.7404 & 2.6180$\times 10^{-2}$ & 0.7391 \\
	& 1.0 &	3.1245$\times 10^{-2}$ & 0.6655 & 3.7023$\times 10^{-2}$ & 0.6006 & 3.7123$\times 10^{-2}$ & 0.5991 \\
\noalign{\smallskip}\hline \noalign{\smallskip}
\end{tabular}
wav = Square wave; sine = Sine-squared; elp = Semi-ellipse;
gau = Gaussian; tri = Triangle.
\end{table}

The Gaussian test problem has a single moving maximum and shows the effects of “clipping” the solution. This is because the flux limiter cannot account for the true peak of the Gaussian as it passes between the grid points. The maximum is clipped less as the order of the algorithm increases. The key to good performance here is the application of a more flexible limiter and a more accurate estimate of the allowable upper and lower bounds on the solution~\cite{b2,b3}.

The numerical results for which the flux limiters are calculated  using exact and approximate solutions of the linear programming problem \eqref{eq:4_4}-\eqref{eq:4_5} are slightly different.  Their $L^1$-norm of  errors and the maximum values are presented in Table~\ref{tab1}. The comparison of the NDVL and NDVA results with $\sigma=0.5$ is given in Fig.~\ref{fig:2}.

In Fig.~\ref{fig:3} the solutions computed by the NDVL scheme are compared with the DIV scheme. 
Their $L^1$-norm of errors and the maximum values are presented in Table~\ref{tab1}.
Notice, that both the maximum values and the  errors of the DIV scheme are better than the corresponding maximum values and errors of the NDVL scheme.

\begin{figure}[!b]
  \centering 
  \includegraphics[scale=0.8]{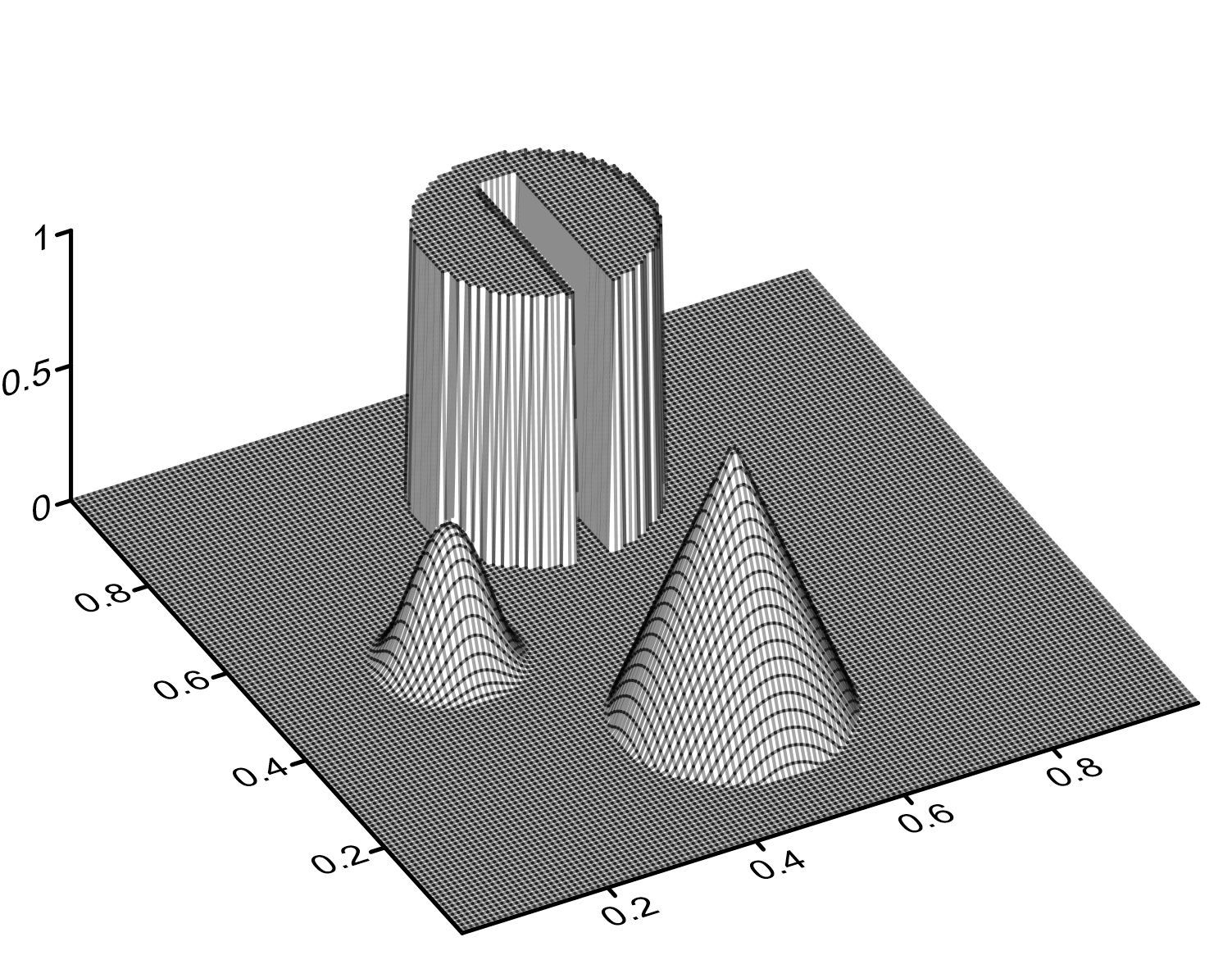}
\caption{Initial data and exact solution at the final time for the solid body rotation test}
\label{fig:4}       
\end{figure}


\subsection{Solid Body Rotations}  \label{Sec5_2}

In this section, we consider the rotation of solid bodies \cite{b33,b7,b2} under an incompressible flow that is described by the linear equation
\begin{equation}
\label{eq:58} 
 \frac{{\partial \rho }}{{\partial t}} + \boldsymbol u \cdot \nabla    {\rho }  = 0 \qquad 	\text{in} \quad \Omega  = \left( {0,1} \right) \times \left( {0,1} \right)
\end{equation} 							
with zero boundary conditions. The initial condition includes a slotted cylinder, a cone and a smooth hump (Fig.~\ref{fig:4}).  The slotted cylinder of radius 0.15 and height 1 is centered at the point (0.5,0.75) and
\[
  \rho (x,y,0) =\begin{cases}
  1 \qquad  {\rm if} \;\; \left| {x - 0.5} \right| \ge 0.025 \;\; {\rm or} \;\; y \ge 0.85\\
  0 \qquad  \rm{otherwise}
\end{cases}   
\]

\begin{figure*}[!t]
\centering
  \includegraphics[width=0.3\textwidth]{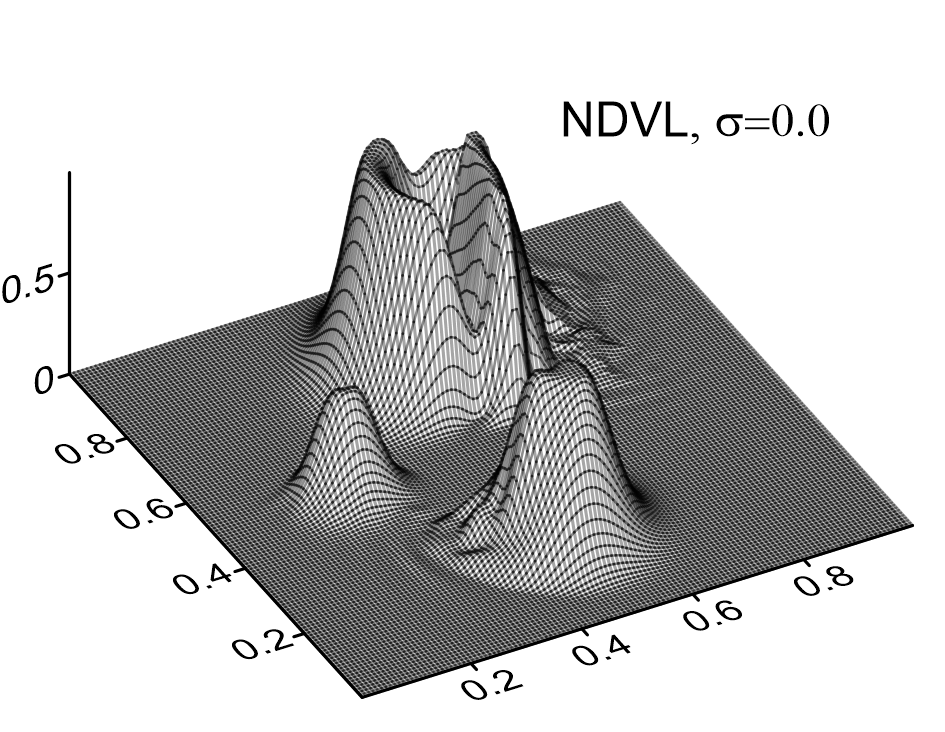}
  \includegraphics[width=0.3\textwidth]{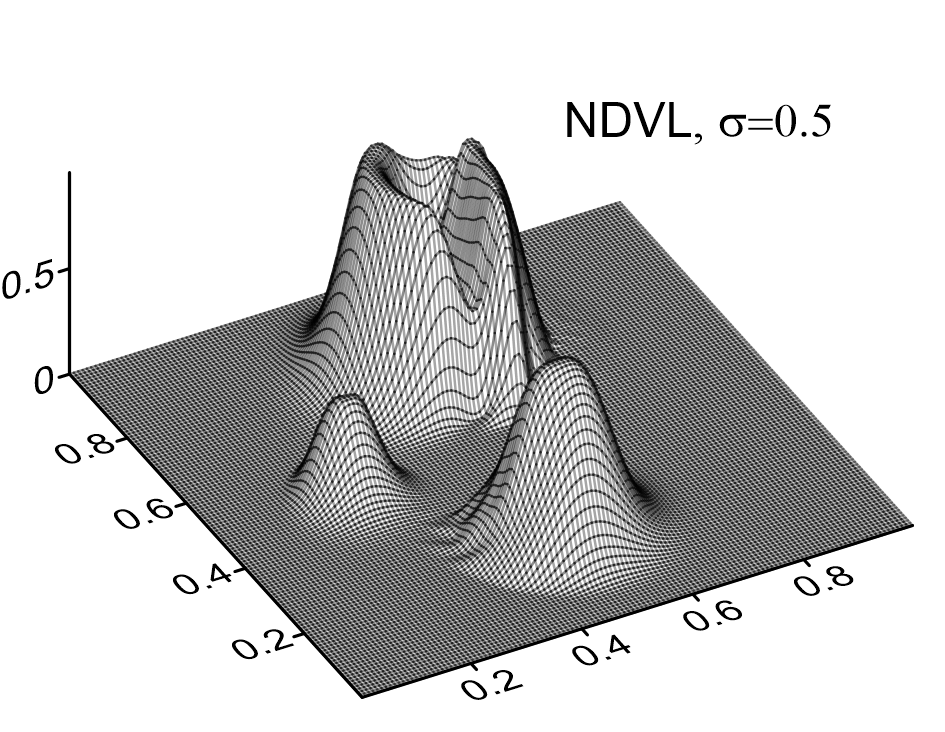}    \includegraphics[width=0.3\textwidth]{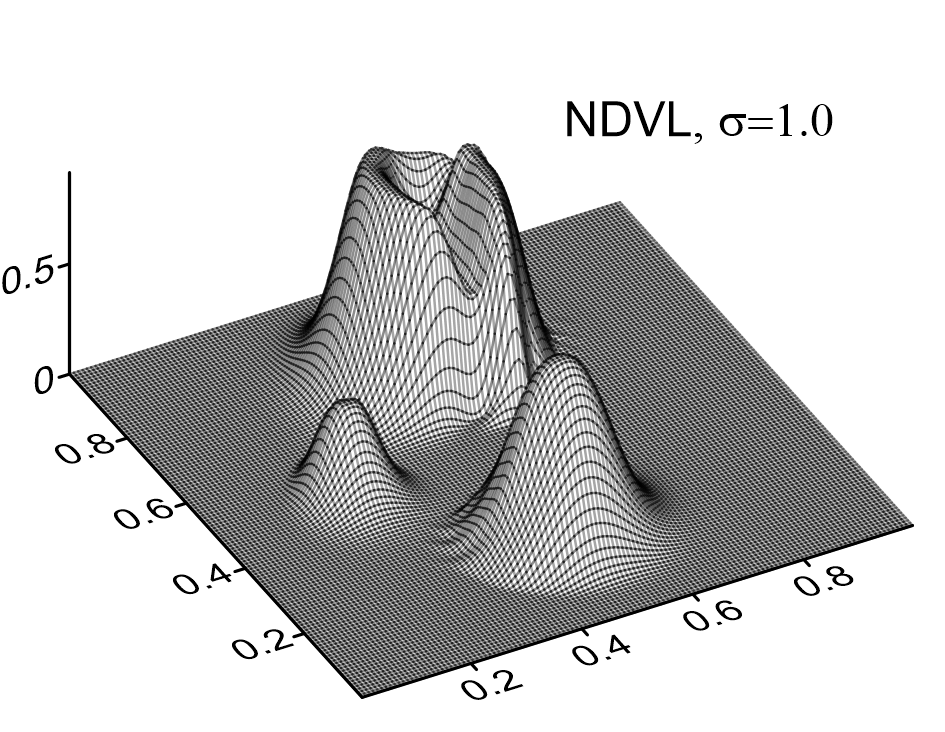}
  \includegraphics[width=0.3\textwidth]{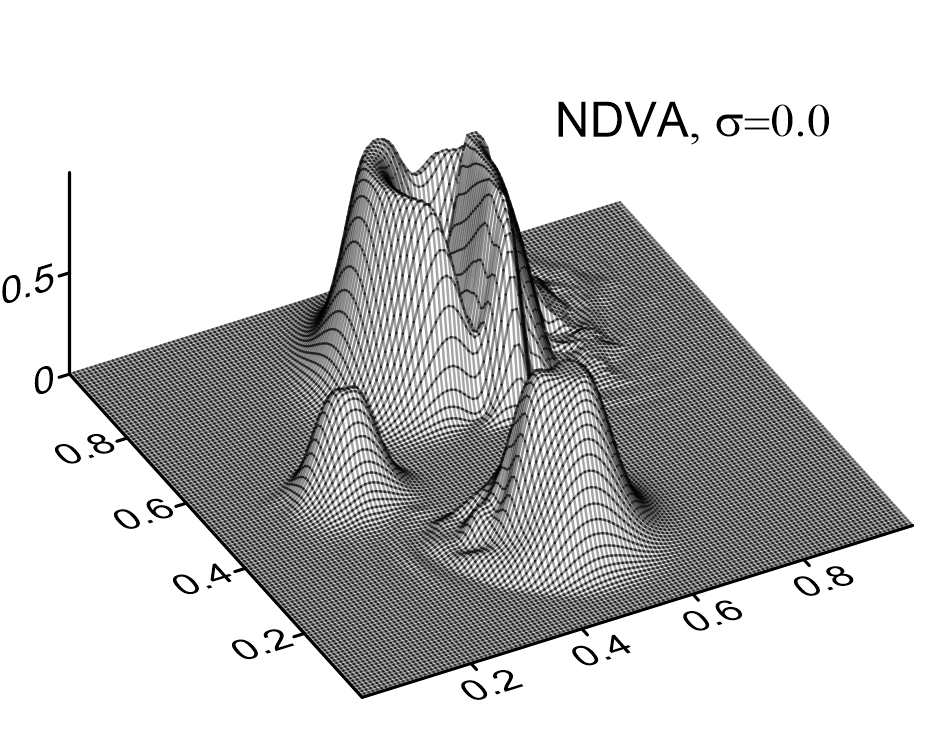}
  \includegraphics[width=0.3\textwidth]{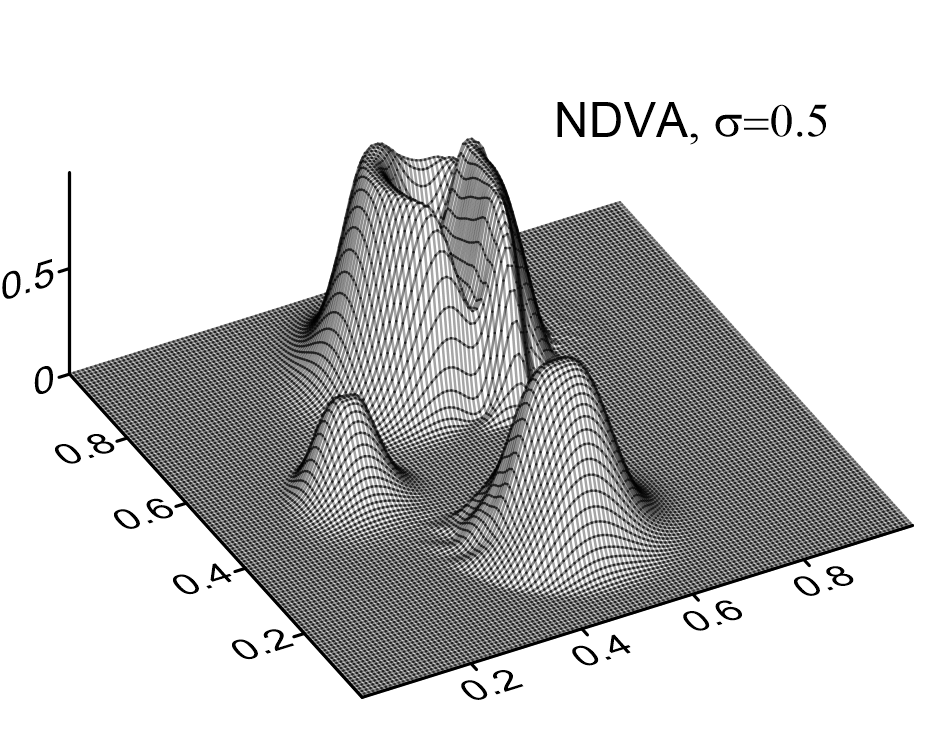}
  \includegraphics[width=0.3\textwidth]{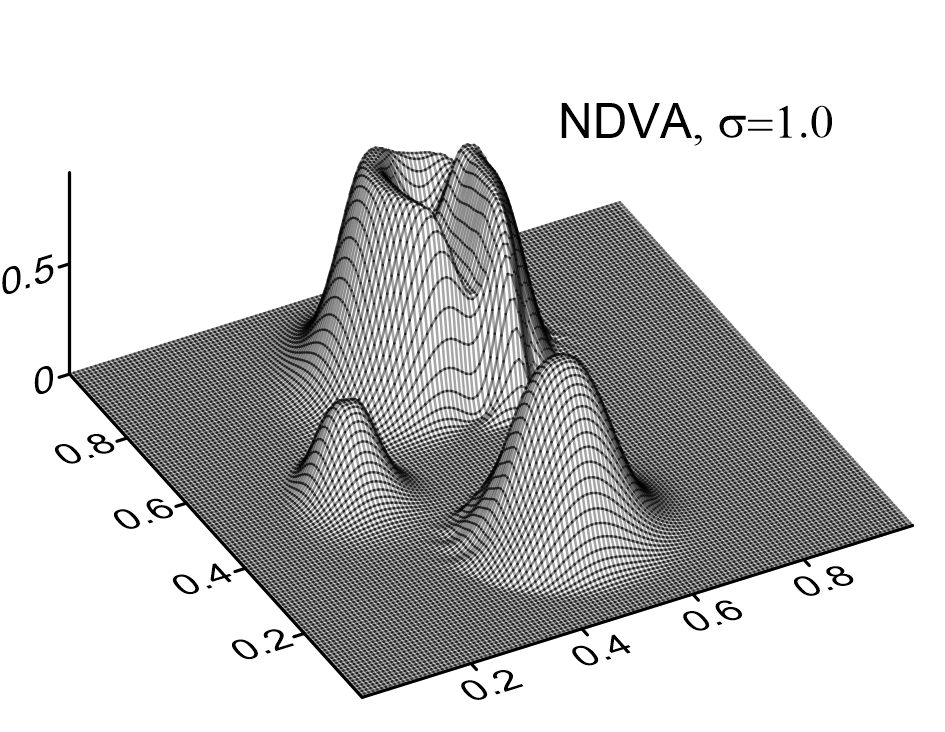}
  \includegraphics[width=0.3\textwidth]{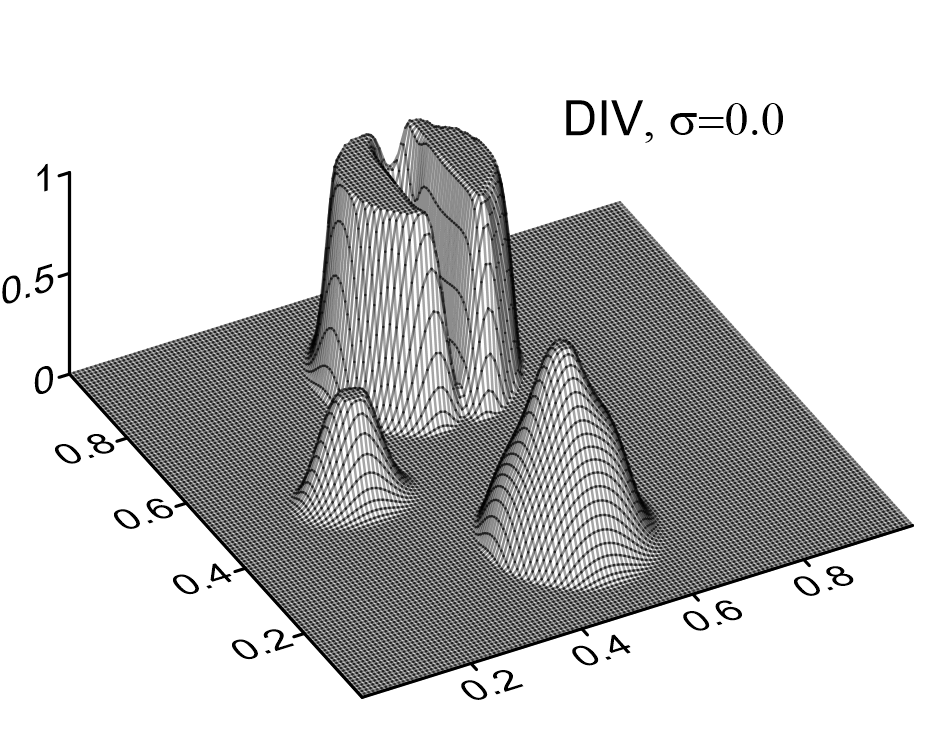}
  \includegraphics[width=0.3\textwidth]{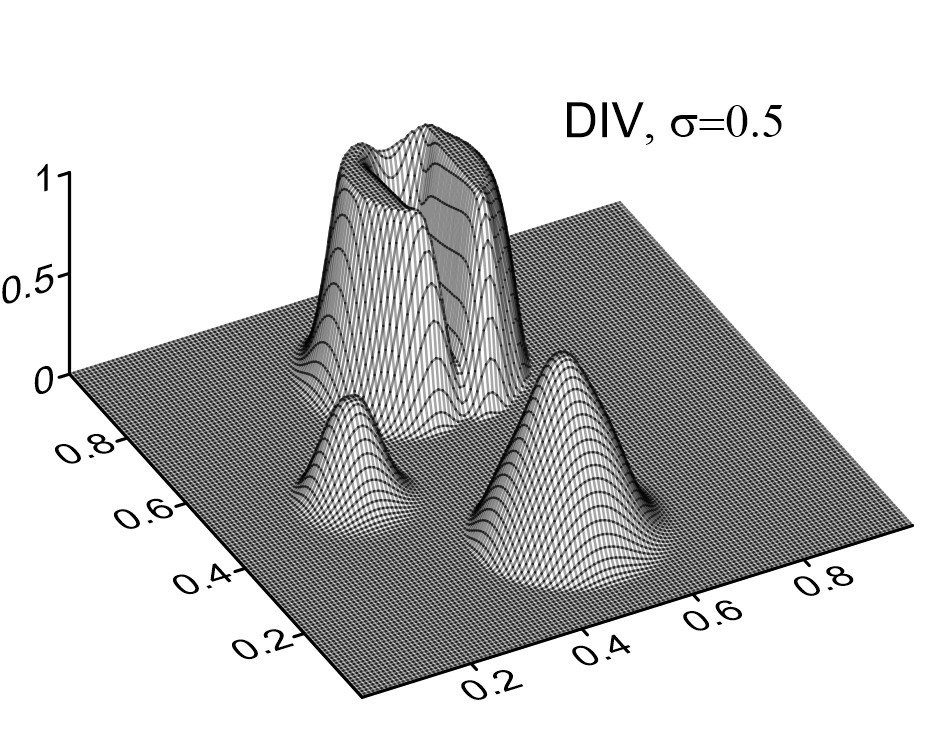}
  \includegraphics[width=0.3\textwidth]{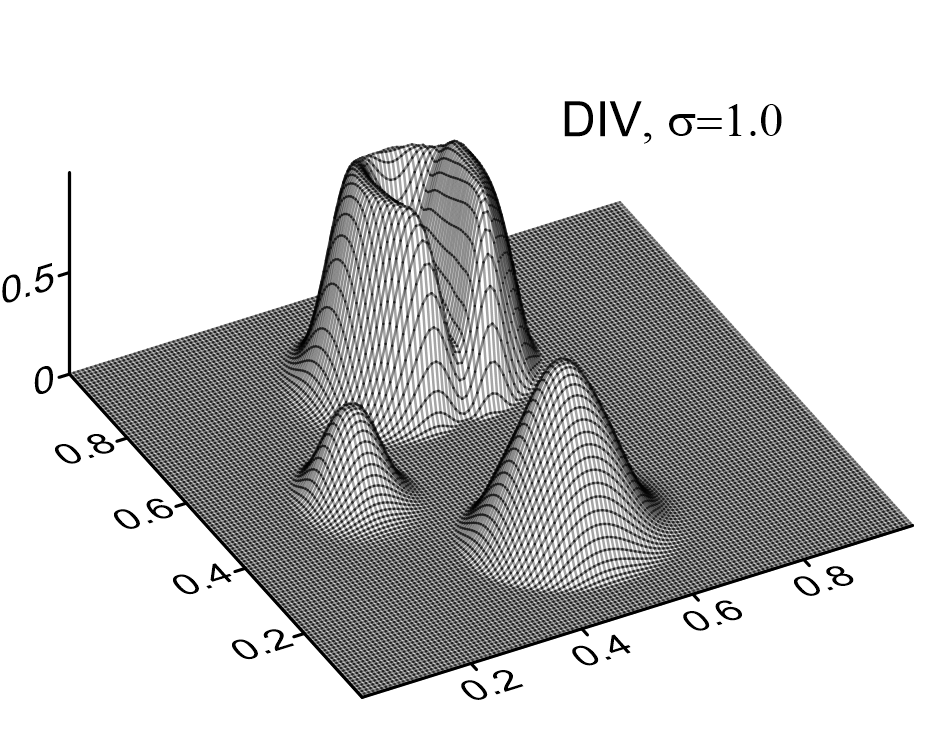}
\caption{Numerical results of the solid body rotation test after one revolution (5000 time steps) with the NDVL, NDVA, and DIV schemes  for various $\sigma$. }
\label{fig:5}       
\end{figure*}

\begin{table} [!b]
\caption{$L^1$-norm of errors and the maximum values of the numerical solutions for the solid body rotation test with the DIV, NDVL, and NDVA schemes}
\label{tab2}       
\begin{tabular}{llllllll}
\hline\noalign{\smallskip}
&  &  \multicolumn{2}{c}{DIV} & \multicolumn{2}{c}{NDVL} & \multicolumn{2}{c}{NDVA}  \\
 \cline{3-4} \cline{5-6} \cline{7-8} \noalign{\smallskip}
 & $\sigma$  &  $L^1$ error & $y_{max}$ & $L^1$ error & $y_{max}$  & $L^1$ error & $y_{max}$   \\
\noalign{\smallskip}\hline\noalign{\smallskip}
  
	& 0.0 &	2.5900$\times 10^{-2}$ & 1.0000 & 4.4189$\times 10^{-2}$ & 0.9959 & 4.4337$\times 10^{-2}$ & 0.9946 \\
{Cyl} & 0.5 &	2.8022$\times 10^{-2}$ & 0.9912 & 4.0252$\times 10^{-2}$ & 0.9548 & 4.0256$\times 10^{-2}$ & 0.9547 \\
	& 1.0 &	3.0557$\times 10^{-2}$ & 0.9681 & 3.9751$\times 10^{-2}$ & 0.9141  & 3.9749$\times 10^{-2}$ & 0.9139 \\
\hline \noalign{\smallskip}
	& 0.0 &	2.9773$\times 10^{-3}$ & 0.8709 & 3.4419$\times 10^{-3}$ & 0.8144 & 3.4419$\times 10^{-3}$ & 0.8143 \\
{Cn}	& 0.5 &	2.1664$\times 10^{-3}$ & 0.8434 & 2.6798$\times 10^{-3}$ & 0.8094 & 2.6799$\times 10^{-3}$ & 0.8092 \\
	& 1.0 &	2.4633$\times 10^{-3}$ & 0.8190 & 2.8654$\times 10^{-3}$ & 0.7905 & 2.8655$\times 10^{-3}$ & 0.7905 \\
\hline \noalign{\smallskip}	
	& 0.0 &	1.2495$\times 10^{-3}$ & 0.4947 & 2.1282$\times 10^{-3}$ & 0.4808 & 2.1283$\times 10^{-3}$ & 0.4804 \\
{Hm}	& 0.5 &	1.2132$\times 10^{-3}$ & 0.4645 & 1.7634$\times 10^{-3}$ & 0.4248 & 1.7636$\times 10^{-3}$ & 0.4247 \\
	& 1.0 &	1.4077$\times 10^{-3}$ & 0.4247 & 1.7701$\times 10^{-3}$ & 0.3869 & 1.7703$\times 10^{-3}$ & 0.3868\\
\noalign{\smallskip}\hline	\noalign{\smallskip}
\end{tabular} 
Cyl = Slotted Cylinder; Cn = Cone; Hm = Hump. 
\end{table}

The cone of also radius $r_0=0.15$ and height 1 is centered at point $({x_0},{y_0}) = (0.25,0.5)$ and
\[ \rho (x,y,0) = 1 - r(x,y) \] 
where 
\[r(x,y) = \frac{{\min (\sqrt {{{(x - {x_0})}^2} + {{(y - {y_0})}^2}} ,{r_0})}}{{{r_0}}} \]

The hump is given by
 \[ \rho (x,y,0) = \frac{1}{4}(1 + \cos (\pi r(x,y)) \]
where $({x_0},{y_0}) = (0.5,0.25)$ and $r_0=0.1$.

The flow velocity is calculated by $\boldsymbol u(x,y) = \left( { - 2\pi (y - 0.5),2\pi (x - 0.5)} \right)$ and in result of which the counterclockwise rotation takes place about domain point (0.5, 0.5). The computational grid consists of uniform $128 \times 128$ cells. The exact solution of \eqref{eq:58} reproduces by the initial state after each full revolution.

The numerical results produced with the NDVL, NDVA, and DIV schemes  after one full revolution (5000 time steps) with different weights $ \sigma $ are presented in Fig.~\ref{fig:5}. The $L^1$-norm of errors and the maximum values of the numerical results are given in Table~\ref{tab2}.
As in the above advection test, we also note a good agreement between the numerical results obtained with the NDVL and NDVA schemes. Again, the solution obtained by the DIV scheme is more accurate than the solutions computed by the NDVL and NDVA schemes.



\section{Conclusions}  \label{Sec6}
In this paper, we derive the formulas for calculating flux limiters for the FCT method for a nonconservative convection-diffusion equation. The flux limiter is computed as an approximate solution of the  optimization problem that can be considered as a background of the FCT approach.

Following FCT, we consider  a hybrid scheme which is a linear combination of monotone and high-order schemes. The difference between high-order flux and low-order flux is considered as an antidiffusive flux. The finding maximal flux limiters for the  antidiffusive fluxes is treated as an optimization problem with a linear objective function. Constraints for the optimization problem are inequalities that are valid for the monotone scheme and applied to the hybrid scheme. This approach allows us to reduce classical two-step FCT to a one-step method for explicit difference schemes and design flux limiters with desired properties.

Numerical experiments show the best results are obtained for the flux correction for the divergent part of the convective flux of a nonconservative convection-diffusion equation. We also note a good agreement between the numerical results for which the flux limiters are computed using exact and approximate solutions of optimization problem.


%
%


\begin{thebibliography}{}
%
\bibitem{b1}
Boris, J.P., Book, D.L.: Flux-corrected transport. I. SHASTA, a fluid transport algorithm that works, J. Comput. Phys. 11, 38–69(1973) \href{url}{https://doi.org/10.1016/0021-9991(73)90147-2}

\bibitem{Harten}
Harten, A., Hyman, J.M., Lax, P.D., Keyfitz, B.: On finite-difference approximations and entropy conditions for shocks, Comm. Pure Appl. Math. 29, 297–322(1976) \href{url}{https://doi.org/10.1002/cpa.3160290305}

\bibitem{Kivva}
Kivva, S.: Flux-corrected transport for scalar hyperbolic conservation laws and convection-diffusion equations by using linear programming. Journal of Computational Physics, 109874, (2020) In Press,
 {https://doi.org/10.1016/j.jcp.2020.109874}

\bibitem{b7}
Kuzmin, D.: Explicit and implicit FEM-FCT algorithms with flux linearization, J. Comput. Phys. 228,  2517–2534(2009) \href{url}{https://doi.org/10.1016/j.jcp.2008.12.011}

\bibitem{b6}
Kuzmin, D., Möller, M.: Algebraic Flux Correction I. Scalar Conservation Laws, in: Flux-Corrected Transp.,  155–206(2006) \href{url}{https://doi.org/10.1007/3-540-27206-2\_6}

\bibitem{b31}
Leonard, B.P., Lock, A.P., Macvean, M.K.: The nirvana scheme applied to one-dimensional advection, Int. J. Numer. Methods Heat Fluid Flow. 5,  341–377(1995) \href{url}{https://doi.org/10.1108/EUM0000000004120}

\bibitem{b33}
Leveque, R.J.: High-resolution conservative algorithms for advection in incompressible flow, SIAM J. Numer. Anal. 33,  627–665(1996) \href{url}{https://doi.org/10.1137/0733033}

\bibitem{b36}
Oran, E.S., Boris, J.P.: Numerical Simulation of Reactive Flow. Second Edition, Cambridge University Press (2001) \href{url}{https://doi.org/10.1017/CBO9780511574474.}

\bibitem{Ortega}
Ortega, J.M., Rheinboldt, W.C.: Iterative Solution of Nonlinear Equations in Several Variables, New York: Academic Press (1970)

\bibitem{b2}
Zalesak, S.T.: Fully multidimensional flux-corrected transport algorithms for fluids, J. Comput. Phys. 31,  335–362(1979) \href{url}{https://doi.org/10.1016/0021-9991(79)90051-2}

\bibitem{b3}
Zalesak, S.T.: The Design of Flux-Corrected Transport (FCT) Algorithms For Structured Grids, in: Flux-Corrected Transp.,  29–78(2006) \href{url}{https://doi.org/10.1007/3-540-27206-2\_2}


\end{thebibliography}


\end{document}